\newtheorem{theorem}{Theorem}[section]
\newtheorem{lemma}[theorem]{Lemma}
\newtheorem{proposition}[theorem]{Proposition}
\newtheorem{corollary}[theorem]{Corollary}
\newtheorem{definition}[theorem]{Definition}
\newtheorem{example}[theorem]{Example}
\def\a{\bar{a}}\def\b{\bar{b}}
\def\x{\bar{x}}\def\y{\bar{y}}\def\z{\bar{z}}
\def\Nn{\mathbb N}
\def\Rn{\mathbb R}
\def\0{\sf 0}
\def\dotminussym#1#2{%
  \setbox0=\hbox{$\m@th#1-$}%
  \kern.5\wd0%
  \hbox to 0pt{\hss\hbox{$\m@th#1-$}\hss}%
  \raise.8\ht0\hbox to 0pt{\hss$\m@th#1.$\hss}%
  \kern.5\wd0}
\newcommand*{\defeq}{\mathrel{\rlap{%
                     \raisebox{0.3ex}{$\m@th\cdot$}}%
                     \raisebox{-0.3ex}{$\m@th\cdot$}}%
                     =}
\begin{document}
\begin{center}
{\Large\sc{On models affine arithmetic}}\vspace{6mm}

{\bf Seyed-Mohammad Bagheri
\footnote{This work is based upon research funded by Iran National Science Foundation (INSF) under project no.4049526.}}
\vspace{4mm}

{\footnotesize {\footnotesize Department of Pure Mathematics, 
Tarbiat Modares University,\\ Tehran, Iran, P.O. Box 14115-134}}
\vspace{1mm}

bagheri@modares.ac.ir 
\end{center}

\begin{abstract}
By affine arithmetic is meant the set of affine consequences of Peano arithmetic.
This is a continuous theory which is studied in the framework of affine logic, a sublogic of continuous logic.
Affine arithmetic is undecidable. Also, its models are generally lattice ordered
and carry a nontrivial metric. Classical models are then characterized as those which are linearly ordered.
In this paper, the affine variants of several classical results in Peano arithmetic are proved.
In particular, an affine form of Gaifman's splitting theorem is proved.
\end{abstract}
\vspace{2mm}

{\sc Keywords}: {\small Affine logic, Peano arithmetic, cut, cofinal extension}

{\small {\sc AMS subject classification: 03C62, 03C66, 03H15}}
\bigskip
\vspace{2mm}

Classical number theory can be roughly described as the second order theory of natural numbers.
It is an absolutely categorical theory and $\Nn$ is its unique model.
The first order theory of $\Nn$ has in contrast non-standard models.
Much of the strength of a theory is due to the expressive power of its ambient logical framework.
An interesting example in this respect is Heyting arithmetic formalized in the framework of intuitionistic logic.
In the present paper, we study a variant of Peano arithmetic formalized in the framework of affine logic.
This is a fragment of continuous logic. Continuous logic is itself an extension of first order logic.
We concentrate on those properties of numbers which can be expressed by affine formulas.
These formulas are built from atomic formulas using addition-subtraction as connectives
and $\sup$-$\inf$ (the real variants of $\exists$-$\forall$) as quantifiers.
Conjunction, disjunction and implication are not allowed.
Surprisingly, affine formulas have the same expressive power on first order structures
as first order formulas.
The main difference between affine arithmetic and first order arithmetic is that now we deal
with a more general type of models (which we call affine models).
They are metric spaces equipped with the operations of addition and multiplication
as well as the lattice operations. All these operations are continuous.
An affine form of induction holds in these models.
First order models are then exactly those models of affine arithmetic which are
linearly ordered (equivalently, have a $\{0,1\}$-valued metric).
We show that many classical notions and results in Peano arithmetic are generalizable to affine arithmetic.
In particular, overspill/underspill theorems, Bezout's theorem and G\"{o}dels lemma hold there.
At the end, we give an affine variant of Gaifman's splitting theorem.

\section{Affine logic}
Affine logic (AL) is the fragment of continuous logic obtained by restricting
connectives to addition and scalar multiplication.
Let $L$ be a first order language. To every function symbol $F$ (resp. relation symbol $R$)
is assigned a Lipschitz constant $\lambda_F\geqslant0$ (resp. $\lambda_R\geqslant0$).
Terms are defined as in first order logic. Affine formulas are inductively defined by
$$1,\ \ \ R(t_1,...,t_n),\ \ \ \phi+\psi,\ \ \ r\phi,\ \ \ \sup_x\phi,\ \ \ \inf_x\phi$$
where $R$ is any $n$-ary relation symbol, $t_1,...,t_n$ are terms and $r\in\Rn$.
The metric symbol $d$ is always considered as a binary relation symbol with $\lambda_d=1$. So, $d(t_1,t_2)$ is formula.
In full continuous logic (CL) (see \cite{BBHU}), one further allows $\phi\wedge\psi$ and $\phi\vee\psi$ as formulas.
Then, identifying $\neg\phi$ with $1-\phi$,\ $\exists$ with $\sup$ and $\forall$ with $\inf$,
every first order formula can be regarded as a CL-formula.
Continuous logic has exactly the same expressive power on first order structures as first order logic (FL).
In particular, notions such as elementary substructure, definable set etc. are the same (see \cite{BBHU}).
In brief, CL is a conservative extension of FL founded for study of more general metric structures
and AL is a relatively weak fragment of CL.
In AL, nontrivial finite structures have infinite elementary extensions.
We will however see that AL is sufficiently strong to express many notions in arithmetic.

In the present text, unless otherwise stated, by formula we mean affine formula.
Model theory in AL is very similar to the model theory in CL (see \cite{Bagheri-Lip, ACMT}).
A $L$-structure (or model) $M$ is defined as in CL.
In particular, it is a metric space of diameter at most $1$.
Also, for $n$-ary $F$ and $n$-ary $R$, the maps
$$F^M:M^n\rightarrow M,\ \ \ \ \ \ \ \ \ R^M:M^n\rightarrow[0,1]$$
are $\lambda_F$-Lipschitz and $\lambda_R$-Lipschitz respectively, where
$$d(\x,\y)=\sum_{i=1}^nd(x_i,y_i).$$

For every $L$-formula $\phi(\x)$, model $M$ and $\a\in M$,\ $\phi^M(\a)$ is defined
by induction on the complexity of $\phi$.
One deduces that for every formula $\phi(\x)$, there are $\lambda_{\phi}$, $\mathsf{b}_{\phi}$
depending only on $\phi$ such that
$$\phi^M:M^n\rightarrow[-\mathsf{b}_{\phi},\mathsf{b}_{\phi}]$$ is $\lambda_\phi$-Lipschitz.
A \emph{condition} is an expression of the form $\phi\leqslant\psi$.
A theory is a set $T$ of closed (without free variables) conditions.
Notions such as $M\vDash\phi\leqslant\psi$,\ \ $M\vDash T$,\ \ $T\vDash\phi\leqslant\psi$ etc are all defined in the usual way.
An elementary embedding is a function $f:M\rightarrow N$ such that $\phi^M(\a)=\phi^N(f(\a))$ for every $\phi$ and $\a\in M$.
If $f$ is the identity map, one writes $M\preccurlyeq N$. 
An affine form of Tarski's test hold as in CL.
For any structure $M$, there is a metric completion $\bar M$ such that $M\preccurlyeq\bar M$.
This is an important fact, as some constructions produce incomplete structures.
We may however identify every structure with its completion and practically assume that all structures are complete.

The affine counterpart of ultraproduct construction is the ultramean construction.
Let $M_i$ be a $L$-structure for each $i\in\Omega$ and $\mu$ be a finitely additive
probability measure defined on the power set of $\Omega$ (also called an ultracharge).
Define a pseudo-metric on $\prod_{i}M_i$ by setting $$d(a,b)=\int_{i\in\Omega} d(a_i,b_i)d\mu.$$
Denote the resulting metric space by $(N,d)$ and the monad of $(a_i)$ by $[a_i]$.
Then, $N$ is a $L$-structure by setting (e.g. for unary operation and relation symbols):
$$F^N([a_i])=[F^{M_i}(a_i)]$$$$R^N([a_i])=\int R^{M_i}(a_i)d\mu.$$
One proves that for every affine formula $\phi(x_1,...,x_n)$ and $a_k=[a_{ki}]$, $k=1,...,n$,
$$\phi^N(a_1,...,a_n)=\int\phi^{M_i}(a_{1i},...,a_{ni})d\mu.$$
The resulting structure is denoted by $\prod_{\mu}M_i$.
It is not hard to see that if $\mu$ is $\sigma$-additive, $\prod_\mu M_i$ is complete.
If $M_i=M$ for every $i$, then $\prod_{\mu}M_i$ is denoted by $M^\mu$ and is called the powermean of $M$ by $\mu$.
In this case, $a\mapsto[a]$ is an elementary embedding of $M$ into $M^\mu$ and we may write $M\preccurlyeq M^\mu$.
As an example, let $\Omega=\{0,1\}$ and $\mu(0)=\mu(1)=\frac{1}{2}$.
We may denote $M^\mu$ by $\frac{1}{2}M+\frac{1}{2}M$.

\begin{theorem}
(Affine compactness) Assume for every $0\leqslant\phi_1$, ..., $0\leqslant\phi_n$ in $T$ and
$0\leqslant r_1,...,r_n$, the condition $0\leqslant\sum_ir_i\phi_i$ is satisfiable. Then, $T$ is satisfiable.
\end{theorem}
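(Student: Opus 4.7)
My plan is to deduce the theorem by first handling the case of finite $T$ via a convexity-and-separation argument in $\Rn^n$, and then extending to arbitrary $T$ by an ultramean indexed over the finite subsets of $T$. Without loss of generality every condition in $T$ has the form $0 \leqslant \phi$ (since $\phi \leqslant \psi$ is equivalent to $0 \leqslant \psi - \phi$). For a finite $T_0 = \{0 \leqslant \phi_1, \ldots, 0 \leqslant \phi_n\}$ I introduce
$$V = \{(\phi_1^M, \ldots, \phi_n^M) : M \text{ is an } L\text{-structure}\} \subseteq \Rn^n.$$
This set is bounded, since each coordinate lies in $[-\mathbf{b}_{\phi_i}, \mathbf{b}_{\phi_i}]$, and convex, since for $M_1, M_2$ and $\lambda \in [0,1]$ the ultramean with measure $\mu = \lambda \delta_1 + (1-\lambda)\delta_2$ produces a structure $N$ with $\phi_i^N = \lambda \phi_i^{M_1} + (1-\lambda)\phi_i^{M_2}$. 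The hypothesis then reads: for every $r \in \Rn^n_{\geqslant 0}$ there exists $v \in V$ with $\langle r, v \rangle \geqslant 0$.

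Suppose for contradiction that $\bar V \cap \Rn^n_{\geqslant 0} = \emptyset$. Since $\bar V$ is compact convex and $\Rn^n_{\geqslant 0}$ is closed convex, a Hahn--Banach strict separation yields $c \in \Rn^n \setminus \{0\}$ and $\alpha < \beta$ with $\langle c, v\rangle \leqslant \alpha$ on $\bar V$ and $\langle c, w\rangle \geqslant \beta$ on $\Rn^n_{\geqslant 0}$. Plugging in $w=0$ gives $\beta \leqslant 0$, and $w = te_i$ with $t \to \infty$ forces each $c_i \geqslant 0$; thus $c \in \Rn^n_{\geqslant 0}$ and $\sup_{\bar V}\langle c,\cdot\rangle \leqslant \alpha < 0$, contradicting the hypothesis applied to $r=c$. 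So some $v \in \bar V$ lies in $\Rn^n_{\geqslant 0}$. To realize $v$ as the value vector of an honest structure, I choose $M_k$ with $v_{M_k} \to v$ and let $\mu$ be any $\{0,1\}$-valued ultracharge on $\Nn$ extending the Fr\'echet filter, so that $N = \prod_\mu M_k$ satisfies $\phi_i^N = \lim_\mu \phi_i^{M_k} = v_i \geqslant 0$ for each $i$, hence $N \models T_0$.

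For arbitrary $T$, let $\Omega$ be the set of finite subsets of $T$, for each $T_0 \in \Omega$ pick by the finite case a model $M_{T_0} \models T_0$, and let $\nu$ be an ultracharge on $\Omega$ extending the tail filter generated by the sets $\{T_0 \in \Omega : T_1 \subseteq T_0\}$ for $T_1 \in \Omega$. Then $N = \prod_\nu M_{T_0}$ is a model of $T$: for every $0 \leqslant \phi \in T$, the set $\{T_0 : \{0 \leqslant \phi\} \subseteq T_0\}$ belongs to the tail filter, hence has $\nu$-measure $1$, so $\phi^N = \int \phi^{M_{T_0}}\,d\nu \geqslant 0$. The main obstacle I anticipate is the separation step, where convexity of $V$ via the ultramean construction must be combined with the cone structure of $\Rn^n_{\geqslant 0}$ to force the separating functional into the nonnegative orthant; everything else is bookkeeping around the ultramean construction already developed in the paper.
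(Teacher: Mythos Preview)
Your proof is correct. The paper does not supply its own proof of this theorem, referring instead to \cite{Bagheri-Lip, ACMT}; your argument---convexity of the value set $V$ via ultrameans, Hahn--Banach separation against the nonnegative orthant to handle finite $T$, then an ultramean indexed by finite subsets of $T$ for the general case---is the standard route and is essentially what one finds in those references.
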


More generally, if every positive linear combination of conditions in $T$ is satisfied in a class $\mathcal K$ of
$L$-structures, then $T$ has a model of the form $\prod_\mu M_i$ where $M_i\in\mathcal{K}$ for all $i$.

We also need the notions of type and saturation.
A $n$-type for $T$ is a positive linear map $p:D_n(T)\rightarrow\Rn$ such that $p(1)=1$
where $D_n(T)$ is the vector space of $T$-equivalence classes of formulas $\phi(x_1,...,x_n)$.
The set of $n$-types of $T$ forms a compact convex set.
Types over parameters are defined similarly.
A type $p$ is \emph{extreme} if for $r\in(0,1)$,\ \ $p=rp_1+(1-r)p_2$ implies that $p=p_1=p_2$.
Extreme types exist by Krein-Milman theorem \cite{Alfsen}.
A model $M$ is $\kappa$-saturated (resp. extremally $\kappa$-saturated) if for every $A\subseteq M$ with $|A|<\kappa$,
every type (resp. extreme type) $p(x)$ of $Th(M,a)_{a\in A}$ is realized in $M$, i.e. there is $b$ such that
$p(\phi)=\phi^M(b)$ for every $\phi(x)$ with parameters in $A$. Saturated models exist by the usual chain arguments.
We have also the following which will be used later.

\begin{proposition} (see \cite{ACMT, Isomorphism})
Let $L$ be countable and $\mathcal F$ be a countably incomplete ultrafilter on $\Omega$.
Then, $M^{\mathcal F}$ is extremally $\aleph_1$-saturated.
\end{proposition}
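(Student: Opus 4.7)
The plan is to reduce extremal $\aleph_1$-saturation of $M^{\mathcal F}$ in AL to classical $\aleph_1$-saturation in CL.

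Since $\mathcal F$ is a $\{0,1\}$-valued ultracharge, $\int f\,d\mathcal F=\lim_{\mathcal F}f(i)$ for bounded $f\colon\Omega\to\Rn$. The ultramean thus coincides, on AL-formulas, with the classical ultralimit, and \L{}o\'{s}'s theorem extends automatically to all CL-formulas because $\min$ and $\max$ commute with ultralimits. Hence $M^{\mathcal F}$ is the usual CL-ultrapower of $M$ by $\mathcal F$; by the standard argument (using countable incompleteness to patch finite approximations of a given CL-type across a decreasing sequence $\Omega=\Omega_0\supseteq\Omega_1\supseteq\cdots$ in $\mathcal F$ with $\bigcap_n\Omega_n=\emptyset$), it is $\aleph_1$-saturated as a CL-structure.

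To transfer this to extremal AL-saturation, I would invoke the Choquet-style identification established in \cite{ACMT, Isomorphism}: over a countable parameter set $A\subseteq M^{\mathcal F}$, the compact convex AL-type space $S^{AL}_n(T_A)$, where $T_A=Th(M^{\mathcal F},a)_{a\in A}$, is a Bauer simplex whose extreme boundary is affinely homeomorphic to the CL-type space $S^{CL}_n(T_A)$, via restriction of CL-types to AL-formulas. Given an extreme AL-type $p$ over $T_A$, let $\hat p\in S^{CL}_n(T_A)$ be the corresponding CL-type, so $p(\phi)=\hat p(\phi)$ for every AL-formula $\phi$. By the $\aleph_1$-saturation of the previous paragraph, $\hat p$ is realized by some $b\in M^{\mathcal F}$, which means $\phi^{M^{\mathcal F}}(b)=\hat p(\phi)$ for every CL-formula $\phi$; in particular $\phi^{M^{\mathcal F}}(b)=p(\phi)$ for every AL-formula $\phi$, so $p$ is realized.

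The main obstacle is the Choquet representation --- the fact that extreme AL-types are precisely (restrictions of) CL-types. Conceptually this holds because AL-formulas together with $\min$ and $\max$ generate a function lattice on the AL-type space whose pure states are exactly the characters, i.e.\ the CL-types. Once this identification is granted, the rest is a direct transfer of the classical ultrafilter saturation argument for CL-ultrapowers.
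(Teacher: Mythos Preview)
The paper gives no proof of this proposition; it simply cites \cite{ACMT, Isomorphism} for the result. Your sketch is correct and is precisely the argument found in those references: since $\mathcal F$ is $\{0,1\}$-valued, the ultramean $M^{\mathcal F}$ coincides with the ordinary CL-ultrapower, which is $\aleph_1$-saturated in CL by the standard countable-incompleteness argument; then one transfers to extremal AL-saturation via the identification of extreme AL-types over a complete theory with (restrictions of) CL-types.

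One minor caution: you call $S^{AL}_n(T_A)$ a \emph{Bauer} simplex, i.e.\ one with closed extreme boundary. That need not hold for an arbitrary structure $M$ (Bauer corresponds to the affine-reduction property, which is not automatic). What you actually need, and what is proved in the cited papers, is only that the extreme AL-types are exactly the restrictions of CL-types --- this holds for the affine theory of any single structure without assuming the boundary is closed. Dropping the word ``Bauer'' removes the inaccuracy without affecting the argument.
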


Let $M$ be a $L$-structure. A \emph{definable predicate} is a map $P:M^n\rightarrow\Rn$ for which
there is a sequence $\phi_k(\x)$ of formulas (with parameters in $M$) such that $\phi^M_k$ tends uniformly to $P$.
If the parameters used to define $P$ is from $A\subseteq M$, we say it is $A$-definable.
A function $f:M^n\rightarrow M$ is definable if $d(f(\x),y)$ is definable.
Such a function is automatically continuous.
A closed set $D\subseteq M^n$ is definable if $$d(\x,D)=\inf\{d(\x,\y):\ \y\in D\}$$ is definable.
Definable sets are not closed under Boolean operations.
By an \emph{end-set} we mean a set of the form $$Z(P)=\{\a\in M: P(\a)=0\}$$ where $P:M^n\rightarrow[0,r]$
is definable. A set $X\subseteq M^n$ is \emph{type-definable} if it is the set of common
solutions of a family of conditions $0\leqslant\phi(\x)$. Every definable set is an end-set
and every end-set is type-definable.
In classical structures, first order definable, affinely definable and end-set are all the same.
A point $b$ is $A$-definable if $\{b\}$ is so. The \emph{definable closure} of $A$ in $M$,
denoted by $\textrm{dcl}_M(A)$, consists of the set of $A$-definable points.
In an extremally $\aleph_0$-saturated model $M$, the maximum of $\phi^M(x,\a)$ is taken by some $b\in M$.
If $b$ is unique, it is definable over $\a$. This is because (in big models) every automorphism fixing $\a$ fixes $b$.
The definable closure of $A\subseteq M$ depends only on the theory of $M$, i.e.
if $A\subseteq M\preccurlyeq N$, then $\textrm{dcl}_M(A)=\textrm{dcl}_N(A)$.
So we may denote it by $\textrm{dcl}(A)$.

\begin{proposition}\label{end-set}
The end-set of a definable predicate $P:M^n\rightarrow[0,r]$ is definable if and only if for every $\epsilon>0$,
there is $\lambda\geqslant0$ such that $$d(\x,Z(P))\leqslant\lambda P(\x)+\epsilon\ \ \ \ \ \ \ \ \forall\x.$$
\end{proposition}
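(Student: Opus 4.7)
The plan is to prove the two directions separately: $(\Leftarrow)$ by an explicit Moreau-envelope construction, and $(\Rightarrow)$ by contradiction via an ultramean together with the saturation result stated just before.

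For $(\Leftarrow)$, for each $n$ choose $\lambda_n$ with $d(\x,Z(P))\leqslant\lambda_n P(\x)+1/n$ and define
\[
f_n(\x)\defeq\inf_{\y}\bigl(d(\x,\y)+\lambda_n P(\y)\bigr).
\]
This is a definable predicate: if $\psi_k\to P$ uniformly with each $\psi_k$ a formula, then the formulas $\inf_{\y}(d(\x,\y)+\lambda_n\psi_k(\y))$ converge uniformly (with error at most $\lambda_n/k$) to $f_n$. Two bounds then give $|f_n-d(\cdot,Z(P))|\leqslant 1/n$: the upper bound $f_n\leqslant d(\cdot,Z(P))$ follows by taking $\y\in Z(P)$ in the infimum, and the lower bound follows from the triangle inequality combined with the hypothesis, since for every $\y$
\[
d(\x,\y)+\lambda_n P(\y)\geqslant d(\x,\y)+d(\y,Z(P))-\tfrac{1}{n}\geqslant d(\x,Z(P))-\tfrac{1}{n}.
\]
Taking the infimum over $\y$ yields $f_n\geqslant d(\cdot,Z(P))-1/n$, so $d(\cdot,Z(P))$ is a uniform limit of definable predicates and is therefore itself definable.

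For $(\Rightarrow)$, set $Q\defeq d(\cdot,Z(P))$ and suppose $Q$ is definable but the bound fails: some $\epsilon>0$ admits, for each $n$, an $\x_n\in M$ with $P(\x_n)\leqslant r/n$ and $Q(\x_n)\geqslant\epsilon$. Pick a non-principal ultrafilter $\mathcal F$ on $\Nn$, form $N=M^{\mathcal F}$, and set $\bar a=[\x_n]$. The integral formula for predicates in the ultramean gives $P^N(\bar a)=\lim_{\mathcal F}P(\x_n)=0$ while $Q^N(\bar a)=\lim_{\mathcal F}Q(\x_n)\geqslant\epsilon$, so $\bar a\in Z(P^N)$ yet $Q^N(\bar a)>0$. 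The definability assumption furnishes formulas $\phi_k\to Q$ uniformly with $\phi_k\leqslant 1/k$ on $Z(P^M)$; combined with the extremal $\aleph_1$-saturation of $N$ (from the preceding proposition, applied to the countably incomplete $\mathcal F$), this uniform vanishing is forced on all of $Z(P^N)$, yielding $Q^N(\bar a)=0$, which contradicts $Q^N(\bar a)\geqslant\epsilon$.

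The main obstacle is the last step of $(\Rightarrow)$, namely showing that $Q^N$ must vanish on the whole extended zero set $Z(P^N)$ and not merely on the embedded image of $Z(P^M)$. Without saturation, $Z(P^N)$ could acquire "new" zeros like $\bar a$ on which $Q^N$ is positive, consistent with both the definability of $Z(P)$ in $M$ and the failure of the Lipschitz bound; the extremal $\aleph_1$-saturation supplied by the preceding proposition is the technical device that collapses the relevant extreme types and forces the contradiction.
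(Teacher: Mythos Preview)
The paper states this proposition without proof (it is background from the affine model-theory references cited just before it), so there is no argument in the paper to compare against; I can only assess your attempt on its own.

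Your $(\Leftarrow)$ direction is correct and efficiently done: the infimal convolution $f_n(\x)=\inf_{\y}\bigl(d(\x,\y)+\lambda_n P(\y)\bigr)$ is a definable predicate, and your two-sided estimate $d(\cdot,Z(P))-\tfrac{1}{n}\leqslant f_n\leqslant d(\cdot,Z(P))$ is exactly right.

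Your $(\Rightarrow)$ direction has a genuine gap at precisely the step you yourself flag as the ``main obstacle''. You correctly manufacture $\bar a\in N=M^{\mathcal F}$ with $P^N(\bar a)=0$ and $Q^N(\bar a)\geqslant\epsilon$, and you correctly observe that the issue is whether $Q^N$ must vanish on all of $Z(P^N)$, not merely on the image of $Z(P^M)$. But you then simply \emph{assert} that extremal $\aleph_1$-saturation ``collapses the relevant extreme types and forces the contradiction'', without giving any mechanism. Saturation says that certain types are \emph{realized} in $N$; it says nothing about the transfer of the implication $P(\x)=0\Rightarrow Q(\x)=0$, which holds in $M$ but is not expressible by any affine (or even continuous) condition. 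Indeed, your own $\bar a$ witnesses that a point of $Z(P^N)$ with $Q^N\geqslant\epsilon$ is perfectly compatible with $N$ being saturated: its existence \emph{came from} an ultrapower, not despite one. Knowing that $\phi_k\leqslant 1/k$ on $Z(P^M)$ gives you no leverage on $\phi_k^N$ at $\bar a$, because $\bar a$ is not in the image of $Z(P^M)$ and no elementary condition links the two zero-sets. Your last paragraph is an accurate diagnosis of the difficulty followed by a hand-wave rather than a proof; the actual work for this direction is missing.
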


Let say a model has the \emph{end-set property} if every end-set in it is definable.
First order models and compact models have this property (see \cite{Bagheri-Definability}).
It is also not hard (using the properties of definable sets) to see that if $M$ has the end-set property,
then so has all its extremally $\aleph_0$-saturated elementary extensions.

\begin{proposition} \label{graph}
Let $M$ be $\aleph_0$-saturated. Then, $f:M^n\rightarrow M$ is definable if and only if its graph is a type-definable set.
\end{proposition}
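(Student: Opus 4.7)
The forward direction is immediate: if $f$ is definable then $d(f(\x),y)$ is a definable predicate, and its zero set is exactly the graph of $f$, so the graph is an end-set and hence type-definable by the fact already recorded in the excerpt.

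The backward direction is the substantive one. Suppose $G=\{(\x,y):\psi_i(\x,y)\leqslant 0\text{ for all }i\in I\}$ is type-definable. The plan is to construct, for each $\epsilon>0$, a formula $\phi_\epsilon(\x,y)$ with $|\phi_\epsilon(\x,y)-d(y,f(\x))|\leqslant\epsilon$ uniformly on $M^{n+1}$; this will exhibit $d(y,f(\x))$ as a uniform limit of formulas and hence as a definable predicate, so that $f$ is definable.

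The first and key step extracts a formula certificate via $\aleph_0$-saturation and affine compactness. I would consider the type in $(\x,y,z)$
\[
\Sigma_\epsilon = \{\psi_i(\x,y)\leqslant 1/n\}_{i\in I,\,n\geqslant 1}\cup\{\psi_i(\x,z)\leqslant 0\}_{i\in I}\cup\{d(y,z)\geqslant\epsilon\}.
\]
Any realization in $M$ would force $z=f(\x)$ from the middle family and, by letting $n\to\infty$, $\psi_i(\x,y)\leqslant 0$ so $y=f(\x)=z$, contradicting $d(y,z)\geqslant\epsilon$. Thus $\Sigma_\epsilon$ is inconsistent with $Th(M)$, and affine compactness yields a positive linear combination of conditions in $\Sigma_\epsilon$ that fails everywhere on $M^{2n+2}$. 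Setting $z=f(\x)$ in the resulting strict inequality (which makes each $\psi_i(\x,z)\leqslant 0$ contribution nonpositive) and then rescaling, I would produce a single formula $\Phi_\epsilon(\x,y)$, formed as a non-negative linear combination of finitely many $\psi_i(\x,y)$'s, satisfying (i) $\Phi_\epsilon(\x,f(\x))\leqslant 0$ for all $\x$ and (ii) $d(y,f(\x))\leqslant\Phi_\epsilon(\x,y)+\epsilon$ for all $(\x,y)$.

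With $\Phi_\epsilon$ in hand the natural candidate is $\phi_\epsilon(\x,y)=\inf_z[d(y,z)+\Phi_\epsilon(\x,z)]$, which is again a formula. Taking $z=f(\x)$ and applying (i) gives $\phi_\epsilon(\x,y)\leqslant d(y,f(\x))$; conversely, (ii) applied to $(\x,z)$ gives $\Phi_\epsilon(\x,z)\geqslant d(z,f(\x))-\epsilon$, so by the triangle inequality $d(y,z)+\Phi_\epsilon(\x,z)\geqslant d(y,f(\x))-\epsilon$ for every $z$, whence $\phi_\epsilon(\x,y)\geqslant d(y,f(\x))-\epsilon$. The main obstacle I anticipate is the extraction of $\Phi_\epsilon$: one must check that the coefficient $c$ attached to the single condition $d(y,z)\geqslant\epsilon$ in the positive combination is strictly positive, for otherwise one cannot solve for $d(y,z)$. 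This is forced because any positive combination involving only the $\psi_i$-conditions evaluates nonpositively at $y=z=f(\x)$ and hence cannot be bounded strictly below $0$ there, ruling out $c=0$. Once this is verified, the $\inf_z$-construction circumvents AL's lack of max and min connectives by leveraging the triangle inequality alone, yielding the uniform $\epsilon$-approximation.
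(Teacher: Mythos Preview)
The paper does not supply a proof of this proposition; it is stated and then used (the surrounding results on definability are referred to \cite{Bagheri-Definability}). So there is nothing to compare against, and I can only evaluate your argument on its own.

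Your argument is essentially correct and is the standard one. The $\inf_z$-construction at the end is exactly right and neatly sidesteps the absence of $\max/\min$ in the affine fragment. Two small points are worth tightening:

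\begin{itemize}
\item In your justification that $c>0$, the sign is flipped: a nonnegative combination of the conditions $1/n-\psi_i(\x,y)\geqslant 0$ and $-\psi_i(\x,z)\geqslant 0$ evaluates \emph{nonnegatively} (not nonpositively) at $y=z=f(\x)$, hence cannot fail there; this is what forces the distance condition to appear with positive coefficient. The conclusion stands, only the wording needs correcting.
\item The passage ``$\Sigma_\epsilon$ is not realized in $M$, hence inconsistent with $Th(M)$'' uses $\aleph_0$-saturation over the parameter set of the $\psi_i$'s. This is fine when only finitely many parameters occur (as in every application the paper makes of the proposition), but if the type-definition of the graph genuinely requires infinitely many parameters then $\aleph_0$-saturation alone does not bridge ``not realized in $M$'' and ``inconsistent with the elementary diagram'': in an elementary extension the same conditions need not cut out a graph. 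You should either note that finitely many parameters are assumed, or observe that one may first reduce to that case.
\end{itemize}

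With these adjustments your proof goes through cleanly.
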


We may extend a bit the definition and call a function $P:D\rightarrow\Rn$, where $D\subseteq M^n$ is definable,
to be definable if there is a sequence of formulas tending to $P$ uniformly on $D$.
Similarly, $f:D\rightarrow M$ is definable if $d(f(\x),y)$ is definable on $D\times M$.
Corresponding variants of the above results then hold similarly.

One way of finding interesting affine theories is affinization.
The \emph{affine part} of a CL-theory $\mathbb T$ is the set of all its affine consequences.
We denote this by $\mathbb{T}_{\textrm{af}}$.

\begin{lemma} \label{ultremean representation}
Let $\mathbb T$ be a CL-theory. Then, $M\vDash\mathbb{T}_{\emph{af}}$ if and only if
it is an elementary substructure of an ultramean of models of $\mathbb T$.
\end{lemma}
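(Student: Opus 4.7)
The \emph{if} direction is immediate: suppose $M\preccurlyeq N$ with $N=\prod_\mu N_i$ and each $N_i\vDash\mathbb T$. For any closed affine condition $0\leqslant\phi$ in $\mathbb T_{\textrm{af}}$, each $N_i$ satisfies $\phi^{N_i}\geqslant 0$, so $\phi^N=\int\phi^{N_i}\,d\mu\geqslant 0$, and by elementarity $\phi^M\geqslant 0$; hence $M\vDash\mathbb T_{\textrm{af}}$.

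For the \emph{only if} direction, assume $M\vDash\mathbb T_{\textrm{af}}$. Expand $L$ to $L(M)$ by adjoining a constant $c_a$ for each $a\in M$, and let $T_M$ denote the affine elementary diagram of $M$, i.e.\ the set of all pairs of inequalities $0\leqslant\phi(c_{\bar a})-\phi^M(\bar a)$ and $0\leqslant\phi^M(\bar a)-\phi(c_{\bar a})$ with $\phi$ affine and $\bar a\in M^n$. The plan is to apply the generalized form of affine compactness stated after Theorem 1.1 to $T_M\cup\mathbb T$, with $\mathcal K$ the class of all $L(M)$-expansions of models of $\mathbb T$. The resulting ultramean $N=\prod_\mu N_i$ then has an $L$-reduct that is an ultramean of $\mathbb T$-models, and the diagram conditions force the map $a\mapsto c_a^N$ to be an affine elementary embedding (and injective, using the diagram condition for $d$); identifying $M$ with its image yields $M\preccurlyeq N$.

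The heart of the argument is verifying the compactness hypothesis: every positive linear combination of conditions from $T_M\cup\mathbb T$ is satisfied (at least approximately) in some member of $\mathcal K$. Such a combination has the form $0\leqslant\sum_i r_i\phi_i+\sigma(c_{\bar a})$, where the $\phi_i$ are closed affine formulas with $\mathbb T\vDash 0\leqslant\phi_i$, and $\sigma(c_{\bar a})$ is a positive combination of $T_M$-conditions. Regarding $\sigma$ as an affine $L$-formula in free variables $\bar y$ after absorbing all parameters, one checks that $\sigma^M(\bar a)=0$, since each $T_M$-summand evaluates to $0$ at $\bar y=\bar a$ in $M$. Now suppose for contradiction there is $\epsilon>0$ such that for every $N'\vDash\mathbb T$ and every interpretation $\bar b$, $\sum_i r_i\phi_i^{N'}+\sigma^{N'}(\bar b)<-\epsilon$. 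Then $\sup_{\bar y}\bigl(\sum_i r_i\phi_i+\sigma(\bar y)\bigr)\leqslant -\epsilon$ is a closed affine condition holding in every $\mathbb T$-model, hence lies in $\mathbb T_{\textrm{af}}$. But evaluating in $M$ at $\bar y=\bar a$ yields $\sum_i r_i\phi_i^M+0\geqslant 0>-\epsilon$ (using $\phi_i^M\geqslant 0$ from $M\vDash\mathbb T_{\textrm{af}}$ and $\sigma^M(\bar a)=0$), contradicting $\sup_{\bar y}(\cdots)^M\leqslant -\epsilon$. So no uniform $\epsilon$ exists and the combination is approximately satisfied in $\mathcal K$ to any precision.

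The main technical point to foresee is the gap between approximate and exact satisfaction: no single $N'\in\mathcal K$ need satisfy the combination exactly. The standard resolution is to glue a sequence of $1/n$-approximate witnesses in $\mathcal K$ into an exact witness in an ultramean along a countably incomplete ultrafilter; and since an ultramean of ultrameans of $\mathbb T$-models rearranges (in Fubini fashion) to an ultramean of $\mathbb T$-models, the conclusion of the generalized affine compactness statement survives, producing the required $N=\prod_\mu N_i$ with $M\preccurlyeq N$.
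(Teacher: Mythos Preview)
Your proof is correct and follows the same overall strategy as the paper: verify that the affine elementary diagram of $M$ is satisfiable in the class $\mathcal K$ of $L(M)$-expansions of $\mathbb T$-models, then invoke the generalized affine compactness stated after Theorem~1.1.

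The paper's argument is considerably shorter, for two reasons. First, the elementary diagram is already closed under positive linear combinations, so it suffices to check that each \emph{single} condition $0\leqslant\phi(\bar a)$ with $\phi^M(\bar a)\geqslant 0$ is realized in some $K\vDash\mathbb T$; there is no need to carry along separate $\mathbb T$-summands $\sum r_i\phi_i$ (and indeed $\mathbb T$ may contain non-affine conditions, so folding it into the theory for affine compactness is a bit awkward---it is better handled purely through the choice of $\mathcal K$, as you in fact do). Second, and more importantly, the paper dispenses with your entire last paragraph by appealing to CL compactness: if \emph{no} $K\vDash\mathbb T$ satisfies $0\leqslant\phi(\bar b)$ for any $\bar b$, then $\mathbb T\cup\{0\leqslant\sup_{\bar x}\phi(\bar x)\}$ is unsatisfiable, whence $\mathbb T\vDash\sup_{\bar x}\phi(\bar x)\leqslant-\epsilon$ for some $\epsilon>0$, yielding the contradiction directly. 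This gives exact (not merely approximate) witnesses in one stroke, so no ultraproduct-of-approximants step is needed. Your ultraproduct fix is valid---and note that since an ultramean along an \emph{ultrafilter} is an ultraproduct, {\L}o\'s's theorem already places the exact witness back in $\mathcal K$, making the Fubini rearrangement remark superfluous---but the CL-compactness shortcut is cleaner.
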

\begin{proof}
Assume $M\vDash\mathbb{T}_{\textrm{af}}$.
Let $0\leqslant\phi^M(\a)$ where $\a\in M$. Then, there must exist some $K\vDash\mathbb T$
such that $K\vDash0\leqslant\phi(\a)$. Otherwise, $\mathbb T\vDash\sup_{\x}\phi(\x)\leqslant-\epsilon$
for some $\epsilon>0$ and hence $M\vDash\sup_{\x}\phi(\x)\leqslant-\epsilon$. This is a contradiction.
We conclude that every condition in the (affine) elementary diagram of $M$ is satisfied in a model of $\mathbb T$.
So, $M$ is elementarily embedded in an ultramean of models of $\mathbb T$. The converse is obvious.
\end{proof}

A complete description of models of the affine part is given in \cite{Ibarlucia} if $\mathbb T$
is a complete first order theory. They are exactly the direct integrals of models of $\mathbb T$.

\section{Affine arithmetic}
The order relation is an essential part of the language of Peano arithmetic.
Since logical implication is not allowed in affine logic, we replace $\leqslant$ with the lattice
operations and axiomatize PA and its affine part in the language $$L_A=\{+\ ,\ \cdot\ ,\wedge\ ,\vee,\ 0, 1\}.$$
Here, all operations have Lipschitz constant equal to $1$.
An advantage of this language is that we deal with a unified form of atomic formulas, namely equalities.
This simplifies the process of affinization.
In fact, the lattice operations are more appropriate for axiomatizing the affine part.
The order relation $\leqslant$ is bi-definable with the lattice operations in classical models. 
The strict order relation $<$ is however not expressible in the affine setting.
This is the source of some essential differences between the classical and affine arithmetics.
In the new language, $\textrm{PA}^-$ is axiomatized as the first order theory of nonnegative parts of lattice ordered
commutative rings with identity, where the resulting order is linear and discrete.
Adding the induction axioms in the usual way, we obtain PA. Now, we go a step further and consider PA as a CL-theory.
So, we replace first order formulas with more general continuous ones.
Then, the induction schema is stated for all continuous formulas in a new form (since implication is missing).
We will state them below for the affine part of PA. For PA the argument is similar.

A CL-theory $\mathbb T$ has \emph{affine reduction} if every CL-formula is approximated
by affine formulas modulo $\mathbb T$, i.e. for each CL-formula $\phi(\x)$ and $\epsilon>0$,
there is an affine formula $\psi(\x)$ such that $$\mathbb{T}\vDash|\phi(\x)-\psi(\x)|\leqslant\epsilon.$$
Every complete first order theory has affine reduction (see \cite{Ibarlucia}).
Generally, to prove affine reduction, it is sufficient to approximate quantifier-free CL-formulas by affine formulas.
By the inclusion-exclusion principle, one has that
$$\bigwedge_{i=1}^n\phi_i\ \equiv\ \sum_{\emptyset\neq J\subseteq\{1,...,n\}}(-1)^{|J|+1}\bigvee_{j\in J}\phi_j.$$

\begin{lemma}\label{affine reduction}
Let $\mathbb{T}$ be a first order theory such that for every atomic formulas $\theta_1,\theta_2$
there is an atomic formula $\eta$ such that $\mathbb{T}\vDash\theta_1\wedge\theta_2\leftrightarrow\eta$.
Then, $\mathbb{T}$ has affine reduction.
\end{lemma}
\begin{proof}
It is sufficient to prove that every quantifier-free first order formula $\phi$ is equivalent
to an affine formula. By the normal form theorem, we may assume
$\phi=\bigwedge_i\bigvee_j\theta_{ij}$ where $\theta_{ij}$ is atomic or negated atomic.
By the inclusion-exclusion principle, $\phi$ is PA-equivalent to a formula of the form
$$\sum_kr_k\phi_k$$ where $r_k=\pm1$ and every $\phi_k$ is a disjunction of atomic or negated atomic formulas.
Using the assumption, every $\phi_k$ can be supposed to be of one of the forms
$$\ \ \ \ \ \theta_1\vee\cdots\vee\theta_\ell,\ \ \ \ \ \neg\eta,\ \ \ \ \ \ \theta_1\vee\cdots\vee\theta_\ell\vee\neg\eta
\ \ \ \ \ \ \ \ \ \ (\theta_i,\eta\ \textrm{atomic}).$$
Then, using the dual of inclusion-exclusion principle and replacing every atomic conjunction with a single one,
we have that $$\textrm{PA}\vDash\phi=\sum_\ell s_\ell\psi_\ell$$
where where $s_\ell=\pm1$ and every $\psi_\ell$ is one of the forms
$$\hspace{2cm}\theta,\ \ \ \ \ \neg\eta,\ \ \ \ \ \ \theta\wedge\neg\eta\ \ \ \ \ \ \ \ \ (\theta,\eta\ \textrm{atomic}).$$
Note also that $\theta\wedge\neg\eta$ is equivalent to $\theta-(\theta\wedge\eta)$.
We conclude that every quantifier-free formula is PA-equivalent to an affine formula.
\end{proof}

As stated before, in affine logic, type spaces of a theory $T$ are compact convex sets.
A model $M\vDash T$ is \emph{extremal} if it every type realized in $M$ is extreme.
Such models exist always and form a CL-elementary class in certain cases. The corresponding CL-theory is
then denoted by $T^{\textrm{ex}}$. By a Bauer theory is meant an affine theory for which the type spaces
are Bauer simplices (i.e. they are simplices and the extreme types form closed sets). We recall two results from \cite{Ibarlucia}.

\begin{theorem} \label{extremal 1}
Let $\mathbb T$ be a CL-theory in $L$ having affine reduction. Then:

(i) $\mathbb{T}_{\emph{af}}$ is a Bauer theory.

(ii) The extremal models of $\mathbb{T}_{\emph{af}}$ are precisely the models of $\mathbb{T}$.
Hence, $\mathbb{T}\equiv(\mathbb{T}_{\emph{af}})^{\emph{ex}}$.
\end{theorem}

\begin{theorem} \label{extremal 2}
Let $\mathbb T$ be a first order theory in $L$ and $M\vDash\mathbb{T}_{\emph{af}}$.
Then the following are equivalent:

(i) $M$ is extremal.

(ii) $M\vDash\mathbb{T}$.

(iii) $M$ is first order.
\end{theorem}

In particular, $\mathbb T\equiv \mathbb{T}_{\textrm{af}}\cup\textrm{Class}_L$ where $\textrm{Class}_L$
is the CL axioms in $L$ stating that the intended model is first order (in our context, that $d(x,y)\in\{0,1\}$).
We conclude that:

\begin{theorem}\label{extremals of AA}
PA has the affine reduction property. So, $\textrm{PA}_\emph{af}$ is a Bauer theory and
$\textrm{PA}=(\textrm{PA}_{\emph{af}})^{\emph{ex}}$. The extremal models of $\textrm{PA}_{\emph{af}}$
are precisely the models of PA and they are precisely the first order models of $\textrm{PA}_{\emph{af}}$.
\end{theorem}
\begin{proof}
Recall that PA (or even $I\Delta_0$) proves that the pairing function 
$$\langle u,v\rangle=\frac{(u+v+1)(u+v)}{2}+v$$
is bijective. So, for atomic formulas $f_1=f_2$ and $g_1=g_2$ in $L_A$,
we have that
$$\textrm{PA}\vDash((f_1=f_2)\wedge(g_1=g_2))\leftrightarrow\langle f_1,g_1\rangle=\langle f_2,g_2\rangle.$$
The other parts are consequences Theorems \ref{extremal 1} and \ref{extremal 2}.
\end{proof}

We denote the affine part of PA by AA and the affine part of $\textrm{PA}^-$ by $\textrm{AA}^-$.
Also, we call models of AA \emph{affine models} of arithmetic.
By Theorem \ref{extremals of AA}, AA is neither AL-complete nor finitely axiomatizable.
We will prove in the next section that it is undecidable in the AL sense.
It is generally not easy to find a concrete axiomatization for the affine part of a CL-theory,
especially when it is undecidable. Nevertheless, we give a tentative list for PA.
All axioms of $\textrm{PA}^-$ except the transitivity and discreteness of ordering are already affine,
since an identity $t_1=t_2$ is expressed by the condition $d(t_1,t_2)=0$.
Meanwhile, some first order logical axioms supporting PA must be replaced with appropriate affine expressions.
Let $|x|=d(x,0)$. Then, $\textrm{AA}^-$ contains at least the axioms A1-A10 below which we denote by $\textrm{AA}^\sim$.
\vspace{2mm}

A1) Axioms of the positive parts of lattice ordered commutative rings with identity (see also \cite{Steinberg})

A2) $\inf_yd((x\wedge y)+1,x)=1-|x|$\ \ and\ \ $x\leqslant x^2$ \ \ \ (substitutes for discreteness)

A3) $d(x+z,y+z)=d(x,y)$

A4) $d(y,z)\leqslant d(xy,xz)+1-|x|$

A5) $d(xy,xz)=d(x^2y,x^2z)\leqslant d(y,z)$

A6) $d(nx,ny)=d(x^n,y^n)=d(x,y)$ \ \ \ \ \ \ (for $n\geqslant1$)

A7) $|x\wedge y|+|x\vee y|=|x|+|y|$


A8) $|xy|=|x\wedge y|$

A9) $|x+y|=|x\vee y|$

A10) $\inf_{z}d((x\wedge y)+z,y)=0$.
\bigskip

Some remarks are in order. The axiom group A1 contains axioms such as the distributivity
of addition and multiplication over conjunction and disjunction.
For example, we have that $$x+(y\wedge z)=(x+y)\wedge(x+z)$$$$x(y\wedge z)=(xy)\wedge(xz).$$
It is then proved that $$x\wedge y+x\vee y=x+y.$$
As is expected, lattice operations define a partial order by setting $x\leqslant y$ if $x\wedge y=x$.
In particular, by the `positive part' mentioned in A1 is meant $0\leqslant x$ for all $x$.
It is also proved that $x\leqslant y$ implies both $x+z\leqslant y+z$ and $xz\leqslant yz$.
In particular, $m\cdot1\leqslant n\cdot1$ whenever $m\leqslant n$.
By the first part of A2,\ \ $|1|=1$. Some other easy consequences of $\textrm{AA}^{\sim}$ are stated below.

\begin{lemma}
(i) $|xyz|=|x\wedge y\wedge z|$

(ii) $|x+yz|=|(x+(y\wedge z)|$.

(iii) $|x+y+z|=|x\vee y\vee z|$

(iv) $|x(y+z)|=|x(y\vee z)|$

\end{lemma}
\begin{proof} We prove (i) and (ii). The remaining are similar.
(i) Using A5, we have that
$$|xyz|=|xzyz|=|xz\wedge yz|=|(x\wedge y)z|=|(x\wedge y)\wedge z|.$$



(ii) We have that $$|x+yz|=|x\vee yz|=|x|+|yz|-|x\wedge yz|
=|x|+|y\wedge z|-|x\wedge y\wedge z|$$$$=|x\vee (y\wedge z)|=|x+(y\wedge z)|.$$
\end{proof}


\begin{lemma}
(i) $|n\cdot1|=1$ for every $n\geqslant1$.

(ii) If $x\leqslant y$, then $|x|\leqslant|y|$.

(iii) If $|x|=1$, then $d(y,z)=d(xy,xz)$.
\end{lemma}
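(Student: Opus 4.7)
The plan is to settle each clause by direct manipulation of the axiom group, deferring the only nontrivial point---completion-based extraction of a witness---to part (i).

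For (ii), I would first note that $|1|=1$: substituting $x=0$ into A2 makes every $0\wedge t$ collapse to $0$ (a consequence of positivity in A1), so the left-hand infimum simply equals $d(0,1)=|1|$, while A2 says it equals $1-|0|=1$. Then A6 applied to the pair $(0,1)$ gives $|n\cdot 1|=d(n\cdot 0,n\cdot 1)=d(0,1)=1$. For (iv), I would read A5 and A4 in sequence: A5 yields $d(xy,xz)\leqslant d(y,z)$, while A4 yields $d(y,z)\leqslant d(xy,xz)+1-|x|$; when $|x|=1$ the slack vanishes and the two inequalities combine into the claimed isometry.

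For (iii), the key observation is that specialising A8 with $z=0$ produces $|xy|=|x\wedge y|$. Assuming $x\leqslant y$ gives $x\wedge y=x$, hence $|x|=|xy|$. Meanwhile, A5 together with $x\cdot 0=0$ (valid in the underlying ring by A1) gives $|xy|=d(x\cdot 0,x\cdot y)\leqslant d(0,y)=|y|$, and chaining the two yields $|x|\leqslant|y|$.

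For (i), the hypothesis $|x|=1$ turns A2 into $\inf_t d(x,(x\wedge t)+1)=0$. I would extract a sequence $(t_n)$ realising this infimum and set $z_n\defeq x\wedge t_n$, so that automatically $z_n\leqslant x$ and $d(x,z_n+1)\to 0$. By A3, $d(z_n,z_m)=d(z_n+1,z_m+1)\leqslant d(z_n+1,x)+d(x,z_m+1)\to 0$, so $(z_n)$ is Cauchy. Since the paper identifies every structure with its metric completion, $(z_n)$ converges to some $y$; continuity of $+$ and of the metric then give $y+1=x$, and the closed condition $z_n\wedge x=z_n$ passes to the limit to yield $y\wedge x=y$, i.e.\ $y\leqslant x$. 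The main (but mild) obstacle sits here in (i): one must remember that ambient models are complete and that the lattice operations are $1$-Lipschitz (declared at the outset of the paper), so that $+$ and $\wedge$ commute with metric limits.
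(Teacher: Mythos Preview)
Your proof is correct and follows essentially the same route as the paper, which simply lists the relevant axioms: A2 for (i), A2 and A6 for (ii), A4 and A5 for (iv). The one deviation is in (iii): the paper cites A5 and A7, whereas you invoke A8 (with $z=0$) together with A5; your argument via $|x|=|x\wedge y|=|xy|\leqslant|y|$ is clean and arguably more transparent than whatever combination of A5 and A7 the paper has in mind.
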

\begin{proof}
(i) Use A6.

(ii) Use A5 and A7.

(iii) Use A4 and A5.
\end{proof}

In particular, $n\mapsto n\cdot1$ is an embedding of $\Nn$ into any model of $\textrm{AA}^\sim$.
The induction axiom for an affine formula $\phi$ depends on its bound.
For the special case where $$\phi(x,\y)=d(t_1(x,\y),t_2(x,\y))$$
the following condition holds in any model of PA, hence it belongs to AA:
$$\sup_x\phi(x,\y)\leqslant\phi(0,\y)+\sup_x(\phi(x+1,\y)-\phi(x,\y)).$$
More generally, let $\phi(x,\y)$ be an affine formula. Then, there are $r_1<\cdots<r_k$
such that the range of $\phi^M(x,\y)$ is included in $\{r_1,...,r_k\}$ for every first order $M$.
Also, recall that $\{x:\phi^M(x,\a)=r_i\}$ is first order definable for each $i$ and $\a\in M$.

\begin{lemma} \label{induction constant}
For every formula $\phi(x,\y)$ there is $\alpha_\phi\geqslant0$ such that the following
condition holds in every model of PA:
$$\sup_x\phi(x,\y)\leqslant\phi(0,\y)+\alpha_\phi\cdot\sup_x(\phi(x+1,\y)-\phi(x,\y)).$$
\end{lemma}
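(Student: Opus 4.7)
The plan is to use the crucial observation stated immediately before the lemma: for every affine formula $\phi(x,\y)$ there exist real numbers $r_1 < r_2 < \cdots < r_k$, depending only on $\phi$, such that $\phi^M(x,\y) \in \{r_1,\ldots,r_k\}$ for every classical $M \vDash \mathrm{PA}$ and every $x,\y \in M$. I will take $\alpha_\phi := k-1$, one less than the number of attainable values, and argue that this works (focusing on the unit step; the general step $u \geq 1$ reduces to it).

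Fix a classical $M \vDash \mathrm{PA}$ and parameters $\y \in M$, and set $\psi(x) := \phi^M(x,\y)$. Let $r_{j_0} = \psi(0)$ and $r^* := r_{j^*} = \sup_x \psi(x)$; this sup is attained in $M$ because the range of $\psi$ is finite and, by the affine reduction of $\mathrm{PA}$ (Theorem \ref{extremals of AA}), each level set $\{x : \psi(x) = r_i\}$ is first-order definable in $M$. If $j^* \leq j_0$ the inequality is immediate, so assume $j_0 < j^*$. For each $i$ with $j_0 < i \leq j^*$ the set $A_i := \{x \in M : \psi(x) \geq r_i\}$ is first-order definable, nonempty, and does not contain $0$; by the least number principle $A_i$ has a minimum $m_i \geq 1$, and then $\psi(m_i - 1) \leq r_{i-1}$ while $\psi(m_i) \geq r_i$. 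Thus the unit-step increment at $x = m_i - 1$ is at least $r_i - r_{i-1}$.

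Writing $J := \sup_x(\phi(x+1,\y) - \phi(x,\y))$, the previous paragraph gives $J \geq r_i - r_{i-1}$ for each $i \in \{j_0+1,\ldots,j^*\}$. Telescoping,
$$\sup_x \phi(x,\y) - \phi(0,\y) \;=\; r_{j^*} - r_{j_0} \;=\; \sum_{i=j_0+1}^{j^*}(r_i - r_{i-1}) \;\leq\; (j^* - j_0)\,J \;\leq\; (k-1)\,J \;=\; \alpha_\phi\,J,$$
which is the desired inequality with step $1$. For a general step $u \geq 1$, the same argument applies after replacing $m_i - 1$ by $m_i - u$ whenever $m_i \geq u$; the finitely many small-$m_i$ cases can be absorbed into a slightly enlarged choice of $\alpha_\phi$ (still depending only on $\phi$) since the total defect is bounded by $r_k - r_1$.

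The main obstacle is the first-order definability of the level sets $\{\psi = r_i\}$ in classical models of $\mathrm{PA}$, since the entire argument hinges on applying the least number principle to them. This is exactly where Theorem \ref{extremals of AA} enters: affine reduction ensures that over classical $\mathrm{PA}$ every affine formula is equivalent to a genuine first-order one, so the finitely-valued $\psi$ has first-order definable preimages. Once that is granted, the remainder is pure bookkeeping with the finitely many possible values $r_1 < \cdots < r_k$ fixed by $\phi$.
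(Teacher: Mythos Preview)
Your argument for the unit step $u=1$ is correct, but it follows a different route from the paper's. The paper's proof is a two-line dichotomy: set $\alpha_\phi=\frac{2\mathbf{b}_\phi}{\min_i(r_{i+1}-r_i)}$ (or $\alpha_\phi=1$ if $k=1$), observe that $\Delta(\y):=\sup_x(\phi^M(x+1,\y)-\phi^M(x,\y))\geqslant0$ always, and split on whether $\Delta(\y)=0$ (then first-order induction gives $\sup_x\phi=\phi(0)$) or $\Delta(\y)>0$ (then $\Delta(\y)\geqslant\min_i(r_{i+1}-r_i)$, so $\alpha_\phi\Delta(\y)\geqslant2\mathbf{b}_\phi\geqslant\sup_x\phi-\phi(0)$). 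Your approach via the least number principle and telescoping is more work but yields the sharper constant $\alpha_\phi=k-1$; the paper's dichotomy is quicker but gives a cruder bound. Both rely on the same underlying fact, namely that in a classical model the level sets of an affine formula are first-order definable (you cite Theorem~\ref{extremals of AA}; the remark in Section~1 that CL and FL have equal expressive power on discrete structures would also suffice).

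Two small points. First, in your trivial case $j^*\leqslant j_0$ (which is really $j^*=j_0$) you say the inequality is ``immediate'', but this silently uses $J\geqslant0$; that does hold---otherwise $\psi(n)$ would strictly decrease along the standard integers through a finite value set---but it deserves a word. Second, the displayed condition in the lemma has $x+u$, yet the paper's own proof works only with $x+1$ (it defines $\Delta(\y)$ using $x+1$), and indeed the condition is false for $u=0$; so the $u$ is evidently a typo for $1$, and (A11), (A12) are then derived from the $u=1$ case. Your attempt to handle ``general $u\geqslant1$'' is therefore unnecessary, and the sketch you give for it (``absorb the small-$m_i$ cases into a slightly enlarged $\alpha_\phi$'') does not work as stated: if $u$ is nonstandard then \emph{every} $m_i$ may satisfy $m_i<u$, so there are no ``finitely many'' bad cases to absorb. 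Simply drop that paragraph.
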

\begin{proof}
Let $r_1,...,r_k$ be as above. If $k=1$, set $\alpha_\phi=0$. Otherwise,
$\alpha_\phi=\frac{2\mathsf{b}_\phi}{r}$
where $$r=\min\{r_{i+1}-r_i:\ 1\leqslant i<k\}.$$
It is clear that for any $M\vDash\textrm{PA}$
$$0\leqslant\Delta(\y)=\sup_{x}(\phi^M(x+1,\y)-\phi^M(x,\y)) \ \ \ \ \ \ \ \forall\y.$$
Then, the intended condition holds in $M$ by induction axioms if $\Delta(\y)=0$.
Otherwise, because $|\sup_x\phi^M(x,\y)|\leqslant\mathsf{b}_\phi$.
\end{proof}

We conclude that the following axiom schemas are part of AA.
\vspace{2mm}

A11)\ \ $\sup_x\phi(x,\y)\leqslant\phi(0,\y)+\alpha_\phi\cdot\sup_x(\phi(x+1,\y)-\phi(x,\y))$.

A12)\ \ $\sup_x\phi(x,\y)\leqslant\phi(0,\y)+\alpha_\phi\cdot\sup_{xu}(\phi(x+(u\wedge1),\y)-\phi(x,\y))$.
\bigskip

Note that A11 implies A12. The coefficient $\alpha_\phi$ is the \emph{induction constant} of $\phi$.
We leave open the question of whether every definable predicate has an appropriate induction constant.

\begin{corollary}\label{induction applications}
Assume $M\vDash\textrm{AA}^\sim+\textrm{A11}$.

(i) If $\phi(0)\leqslant r$ and $\phi(x+1)\leqslant\phi(x)$ for every $x$, then $\phi(x)\leqslant r$ for every $x$.

(ii) If $\phi(0)=r$ and $\phi(x+1)=\phi(x)$ for every $x$, then $\phi(x)=r$ for every $r$.
\end{corollary}

In the next sections, we will frequently apply the technic used above.
Instead of deducing results from the axioms, we write affine
conditions holding in all models of PA. We then deduce that they belong to AA.

Easy affine models of arithmetic are obtained by the ultramean construction.
If $\mu$ is an ultracharge on a set $\Omega$ and $M_i$ is a model of PA for each $i\in\Omega$, then $\prod_\mu M_i$
is a model of AA. In particular, $\Nn^\mu$ is a model of AA. 
Note also that the model $\frac{1}{2}\Nn+\frac{1}{2}\Nn$ is Archimedean, i.e. the elementary map
$n\mapsto(n,n)$ is cofinal. We will show in section \ref{collection section} that there are arbitrarily large such models.

\section{Undecidability}
Usually, G\"{o}del numbering is performed for first order formulas \cite{Kaye, Kim}.
This can be however done for integral (i.e. with integer coefficients) continuous $L_A$-formulas similarly.
In particular, we may arrange that $\mathsf{b}_\phi$, $\lambda_\phi$ be integer and the functions
$\phi\mapsto\mathsf{b}_\phi$ and $\phi\mapsto\lambda_\phi$ be computable.
Assuming such a numbering is given, we denote the G\"{o}del number of $\phi$ by $\lceil\phi\rceil$ (an injective map).
Then, for a CL-theory $\mathbb T$ in $L_A$ set:
$$\mathcal{G}(\mathbb{T})=\{\lceil\phi\rceil:\ \mathbb{T}\vDash0\leqslant\phi,\ \ \phi\ \textrm{is an integral CL-formula}\}$$
$$\mathcal{G}_{\textrm{af}}(\mathbb{T})=\{\lceil\phi\rceil:\ \mathbb{T}\vDash0\leqslant\phi,\ \ \phi\ \textrm{is an integral AL-formula}\}.$$
We say $\mathbb{T}$ is \emph{decidable} in CL (resp. in AL) if the set
$\mathcal{G}(\mathbb T)$ (resp. $\mathcal{G}_{\textrm{af}}(\mathbb T)$) is computable.
The first definition essentially coincides with the standard definition for first order theories.
An function $f:\omega^n\rightarrow\omega$ is \emph{representable} in an affine $L_A$-theory $T$ if
there is an integral affine $\phi(\x,y)$ such that
$$T\vDash d(f(\bar{n}),y)=\phi(\bar{n},y)\ \ \ \ \ \ \ \ \ \forall\bar{n}\in\omega^n.$$
A relations $S\subseteq\omega^n$ is \emph{representable} in $T$ if there is an integral affine $\psi(\x)$ such that
$$\bar{n}\in S\ \ \Longrightarrow\ \ T\vDash1\leqslant\psi(\bar{n})$$
$$\bar{n}\not\in S\ \ \Longrightarrow\ \ T\vDash\psi(\bar{n})\leqslant0.$$
Below, we adopt proofs of the classical results (see \cite{Kim}) for the present situation.

\begin{lemma}
Every recursive function or recursive relation is representable in AA.
\end{lemma}
\begin{proof}
Let $f$ be a recursive function.
So, there is a first order formula $\theta(\x,y)$ such that
$$\textrm{PA}\vDash\forall y(f(\bar{n})=y\ \leftrightarrow\ \theta(\bar{n},y))\ \ \ \ \ \ \forall\bar{n}.$$
We may write this as
$$\textrm{PA}\vDash d(f(\bar{n}),y)=1-\theta(\bar{n},y)\ \ \ \ \ \ \forall\bar{n}.$$
By Theorem \ref{extremals of AA} (and its proof), there is an integral affine formula $\phi(\x,y)$ such that
$$\textrm{PA}\vDash1-\theta(\x,y)=\phi(\x,y).$$
We conclude that PA (and hence AA) satisfies
$$d(f(\bar{n}),y)=\phi(\bar{n},y)\ \ \ \ \ \ \forall\bar{n}.$$
For the second part, assume a recursive set $S$ is represented in PA by
the formula $\theta(\x)$. So, one has that
$$\bar{n}\in S\ \Longrightarrow\ \textrm{PA}\vDash\theta(\bar{n})$$
$$\bar{n}\not\in S\ \Longrightarrow\ \textrm{PA}\vDash\neg\theta(\bar{n}).$$
Let $\psi(\x)$ be an integral affine formula such that $\textrm{PA}\vDash\theta(\x)=\psi(\x)$.
Then, PA satisfies $0\leqslant\psi(\x)\leqslant1$ as well as the conditions:
$$1\leqslant\psi(\bar{n})\ \ \ \ \ \ \textrm{for}\ \bar{n}\in S$$
$$\psi(\bar{n})\leqslant0\ \ \ \ \ \ \ \textrm{for}\ \bar{n}\not\in S.$$
We conclude that AA satisfies these conditions.
\end{proof}
\vspace{2mm}

For an integral $\phi$ define $$\textsf{d}(\phi(x))=\inf_x(\lambda_\phi d(x,\lceil\phi\rceil)+\phi(x)).$$
Then, since $\phi(y)\leqslant\lambda_\phi d(x,y)+\phi(x)$ in every model,
we have that $$\vDash\phi(y)=\inf_x(\lambda_\phi d(x,y)+\phi(x)).$$
The following lemma is proved as in the classical case. It states that
$\phi\mapsto\lceil\mathsf{d}\phi\rceil$ is computable.

\begin{lemma}
There is a recursive function $\textsf{dg}:\omega\rightarrow\omega$ such that
$\textsf{dg}(\lceil\phi\rceil)=\lceil\textsf{d}\phi\rceil$ for every $L_A$-formula $\phi$.
\end{lemma}

\begin{lemma} (Fixed point)
Let $\phi(x)$ be an integral affine formula. Then, there is an integral affine sentence $\sigma$
such that $$\textrm{AA}\vDash\sigma=\phi(\lceil\sigma\rceil).$$
\end{lemma}
\begin{proof}
Assume $\textsf{dg}$ is represented by the affine formula $\psi(x,y)$ so that
$$\textrm{AA}\vDash d(y,\textsf{dg}(n))=\psi(n,y)\ \ \ \ \ \ \ \forall n.\hspace{18mm} (\dag)$$
Let $\delta(x)=\inf_y(\lambda_\phi\psi(x,y)+\phi(y))$ and $n=\lceil\delta(x)\rceil$.
Define $$\sigma=\textsf{d}(\delta(x))=\inf_x(\lambda_\delta d(x,n)+\delta(x)).$$
Let $k=\textsf{dg}(n)=\lceil\sigma\rceil$. Then
$$\textrm{AA}\vDash\sigma=\delta(n)=\inf_y(\lambda_\phi\psi(n,y)+\phi(y)).$$
So, by ($\dag$) above, $$\textrm{AA}\vDash\sigma=\inf_y(\lambda_\phi d(y,k)+\phi(y))=\phi(k)=\phi(\lceil\sigma\rceil)$$
as required.
\end{proof}
\vspace{1mm}

Recall that first order and affine definability are equivalent in first order structures.

\begin{theorem} (Undefinability of affine truth)
Let $$S=\{\lceil\sigma\rceil:\ 0\leqslant\sigma^{\Nn},\ \ \ \sigma\ \textrm{is integral affine}\}.$$
Then, $S$ is not definable in $\Nn$.
\end{theorem}
\begin{proof}
Assume it is definable. Then, there is an affine integral $\beta(x)$ such that
$$S=\{n\in\Nn: 0\leqslant\beta^{\Nn}(n)\}.$$
Use fixed point lemma for $-\beta(x)-1$.
Then, $\Nn\vDash\sigma=-\beta(\lceil\sigma\rceil)-1$ for some integral affine sentence $\sigma$.
In other words, $\beta^{\Nn}(\lceil\sigma\rceil)=-\sigma-1$. Suppose $0\leqslant\sigma^{\Nn}$.
Then, $\beta^{\Nn}(\lceil\sigma\rceil)<0$.
So, $\lceil\sigma\rceil\not\in S$, i.e. $0\nleqslant\sigma^{\Nn}$. Similarly, if $\sigma^{\Nn}\leqslant-1$
then $\beta^{\Nn}(\lceil\sigma\rceil)\geqslant0$ and hence $0\leqslant\sigma^\Nn$.
\end{proof}

\begin{proposition}
AA is undecidable in affine logic.
\end{proposition}
\begin{proof}
Assume that $\mathcal{G}_{\textrm{af}}(\textrm{AA})$ is computable.
Then it is represented by an affine formula $\beta(x)$. Let $\sigma$ be a fixed point of $-\beta(x)$.
So, $$\textrm{AA}\vDash\sigma=-\beta(\lceil\sigma\rceil).$$
Now, if $\textrm{AA}\vDash0\leqslant\sigma$, then $\lceil\sigma\rceil\in\mathcal{G}_{\textrm{af}}(\textrm{AA})$.
So, by representability, $\textrm{AA}\vDash1\leqslant\beta(\lceil\sigma\rceil)$.
So, $\textrm{AA}\vDash\sigma\leqslant-1$. This is a contradiction.
Also, if $\textrm{AA}\nvDash0\leqslant\sigma$, then $\lceil\sigma\rceil\not\in\mathcal{G}_{\textrm{af}}(\textrm{AA})$.
So, by representability, $\textrm{AA}\vDash\beta(\lceil\sigma\rceil)\leqslant0$.
So, $\textrm{AA}\vDash0\leqslant\sigma$, again a contradiction.
\end{proof}

\section{Order}
Let $M$ be a model of $\textrm{AA}^\sim$ and set $D_{\leqslant}=\{(x,y):\ x\leqslant y\}$.
By the continuity of $\wedge$, the relation $D_{\leqslant}$ is closed. Moreover,
$$d((a,b),D_{\leqslant})=\inf_{xy}d((a,b),(x\wedge y, y))=\inf_{xy}[d(a,x\wedge y)+d(b,y)]$$
which shows that $D_{\leqslant}$ is definable.
Also, let $$[a,\infty)=\{u:\ a\leqslant u\},\ \ \ \ \ \ \ \ \ [a,b]=\{u: a\leqslant u\leqslant b\}.$$
These sets are closed and
$$d(x, [a,\infty))=\inf_t d(x, a\vee t),\ \ \ \ \ d(x,[a,b])=\inf_t d(x,(a\vee t)\wedge b).$$
So, they are definable.
\begin{lemma} \label{modified subtraction}
In any model of $\textrm{AA}^\sim$,\ \ $x\leqslant y$ if and only if there exists $t$ such that $x+t=y$.
This $t$ is unique.
\end{lemma}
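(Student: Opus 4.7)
The plan is to extract the witness $t$ directly from axiom A10 by a Cauchy sequence argument, with A3 guaranteeing convergence, and to handle the converse via the monotonicity of addition noted in the discussion after A10.

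For the forward implication, I would assume $x\leqslant y$, i.e.\ $x\wedge y=x$. Substituting into A10 yields $\inf_t d(x+t,y)=0$, so I pick a sequence $(t_n)$ with $d(x+t_n,y)\to 0$. The triangle inequality combined with A3 gives
$$d(t_n,t_m)=d(x+t_n,x+t_m)\leqslant d(x+t_n,y)+d(y,x+t_m)\to 0,$$
so $(t_n)$ is Cauchy. Invoking the standing assumption that models may be identified with their completions, $t_n\to t$ for some $t\in M$. Since $+$ is $1$-Lipschitz it is continuous, so $x+t_n\to x+t$; comparing with the limit $y$ gives $x+t=y$.

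For the reverse implication, suppose $x+t=y$. Axiom group A1 asserts we are in the positive part, so $0\leqslant t$; by the monotonicity of addition recorded in the preamble (``$x\leqslant y$ implies $x+z\leqslant y+z$''), we conclude $x=x+0\leqslant x+t=y$. Uniqueness is immediate from A3: if $x+t_1=x+t_2$, then $d(t_1,t_2)=d(x+t_1,x+t_2)=0$.

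The only real obstacle is converting the infimum in A10 into an attained value; this is precisely where both A3 (to manufacture the Cauchy property) and completeness are indispensable. Everything else is a short bookkeeping exercise with the axioms already in hand.
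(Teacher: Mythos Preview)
Your proof is correct and follows essentially the same approach as the paper: pick witnesses $t_k$ from A10, use A3 to show the sequence is Cauchy, pass to the limit by completeness, and handle the converse and uniqueness via $0\leqslant t$ and A3 respectively. You are in fact more explicit than the paper about the Cauchy step, which the paper leaves as the bare assertion that ``$t_k$ is convergent.''
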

\begin{proof}
Since $0\leqslant t$, we have that $x\leqslant x+t$.
Conversely, assume $x\leqslant y$. By A10, for each $k$ there is $t_k$ such that
$$d(x+t_k,y)\leqslant\frac{1}{k}.$$
Then, $t_k$ is convergent to say $t$ and one has that $x+t=y$.
Uniqueness of $t$ is by the cancelation law A3.
\end{proof}

We may then set $y-x=t$ if $y=x+t$.
This is a partial definable function since its graph is type-defined by $d(x+t,y)=0$.
We call $x$ \emph{normal} if $|x|=1$. Also, $x$ is \emph{idempotent} if $x^2=x$.

\begin{lemma} \label{normality}
Let $M\vDash\textrm{AA}^\sim$ and $x\in M$. Then,

(i) $x$ is normal if and only if $1\leqslant x$. In particular, if $x<1$ then $|x|<1$.

(ii) $x$ is idempotent if and only if $x\leqslant1$.
\end{lemma}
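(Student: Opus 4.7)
For (i), the implication $1\leqslant x\Rightarrow|x|=1$ combines the monotonicity $x\leqslant y\Rightarrow|x|\leqslant|y|$ (part (iii) of the preceding lemma) with $|1|=1$ (its part (ii)) and the general diameter bound $|x|\leqslant1$. The converse $|x|=1\Rightarrow1\leqslant x$ is essentially part (i) of the same lemma, which produces some $y$ with $y+1=x$; Lemma~\ref{modified subtraction} then delivers $1\leqslant x$. The parenthetical ``$x<1\Rightarrow|x|<1$'' is the contrapositive: if $|x|=1$ and $x\leqslant1$ both held, then the first direction would give $1\leqslant x\leqslant1$, forcing $x=1$ and contradicting $x\neq1$.

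For (ii), the direction $x\leqslant1\Rightarrow x^2=x$ is a sandwich: monotonicity of multiplication by positives (a standard consequence of A1) yields $x^2\leqslant x$, while A2 supplies $x\leqslant x^2$. The harder direction $x^2=x\Rightarrow x\leqslant1$ is where the real work lies. My plan is to introduce $w=x\vee1$ and prove $w=1$. First, $w\geqslant1$, so by (i) just proved $|w|=1$. Next, distributivity of multiplication over $\vee$ (part of A1) combined with $x^2=x$ gives
$$w^2=(x\vee1)(x\vee1)=x^2\vee x\vee1=x\vee1=w.$$
Finally, since $|w|=1$, multiplication by $w$ is an isometry (part (iv) of the preceding lemma), so
$$d(1,w)=d(w\cdot1,w\cdot w)=d(w,w^2)=0,$$
forcing $w=1$, that is, $x\vee1=1$, i.e.\ $x\leqslant1$.

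The main obstacle is recognising the reduction $w=x\vee1$ for the hard direction of (ii). A direct assault on $x^2=x$ produces only tautologies because the sharp multiplicative identities (A4, part (iv) of the preceding lemma) tighten exactly when the multiplier has norm $1$, a property that cannot be assumed of $x$ itself. Passing to $w=x\vee1$ preserves idempotence (via distributivity and absorption in the lattice-ordered ring) while forcing normality, at which point the isometry collapses $d(1,w)$ to $d(w,w^2)=0$.
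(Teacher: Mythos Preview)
Your proof of (i) and of the easy direction of (ii) is correct and essentially coincides with the paper's (you simply quote the preceding lemma where the paper partly re-derives it).

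For the hard direction of (ii), your argument is correct but genuinely different from the paper's. The paper writes $x=y+u$ with $u=x\wedge1$, observes $u^2=u$ from the easy direction, expands $(y+u)^2=y+u$ together with $y\leqslant y^2$ to force $yu=0$ and $y^2=y$, and then computes $0=yu=y(x\wedge1)=yx\wedge y=y\wedge y=y$, whence $x=u\leqslant1$. Your route instead passes to $w=x\vee1$: distributivity of multiplication over $\vee$ keeps $w$ idempotent, part (i) makes $w$ normal, and then the isometry $d(y,z)=d(wy,wz)$ from the preceding lemma collapses $d(1,w)$ to $d(w,w^2)=0$. This is shorter and more conceptual---it exploits the metric axiom A4/A5 directly rather than a chain of ring-lattice cancellations---while the paper's argument is purely algebraic and would survive in a setting without the metric isometry property. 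Both are fine; yours is arguably the cleaner one here.
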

\begin{proof}
(i) Assume $x\in M$ is normal.
By A2, for each $k$ there is $y_k$ such that $$d(x,y_k+1)\leqslant\frac{1}{k}.$$
Then, $y_k$ is Cauchy and hence convergent to say $y$. We have therefore that $x=y+1$.
Conversely, by the axioms A7-A9, $|1+y|+|1y|=1+|y|$ for every $y$. So, every element of the form $1+y$ is normal.

(ii) Assume $x\leqslant1$. Then, $x^2\leqslant x\leqslant x^2$ and hence $x^2=x$.
Conversely assume $x^2=x$. Let $x=y+u$ where $u=x\wedge1$. Then, $u^2=u$ and
$$y^2+2yu+u^2=y+u\leqslant y^2+u^2.$$ So, $yu=0$ and hence $y^2=y$. On the other hand,
$$0=yu=yx\wedge y=y(y+u)\wedge y=y\wedge y=y.$$
We conclude that $x\leqslant1$.
\end{proof}

\begin{proposition}\label{models AA sim}
Let $M$ be a model of $\textrm{AA}^\sim$. The following are equivalent:

(i) $(M,\leqslant)$ is linearly ordered.

(ii) There is no $x$ such that $0<x<1$.

(iii) For every $x$,\ \ $x=0$ or $|x|=1$.

(iv) $M$ is a model of $\textrm{PA}^-$.
\end{proposition}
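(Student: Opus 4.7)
The plan is to prove the equivalence cyclically: (iv) $\Rightarrow$ (i) $\Rightarrow$ (ii) $\Rightarrow$ (iii) $\Rightarrow$ (iv). The first step is immediate from the axiomatization of PA$^-$, so three substantive implications remain.

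For (ii) $\Rightarrow$ (iii) I would exploit axiom A8 with $y=1$, $z=0$, which yields $|x|=|x\cdot 1|=|x\wedge 1|$. If $x\neq 0$ then $x\wedge 1>0$; since $x\wedge 1\leqslant 1$ and (ii) excludes strict intermediates, $x\wedge 1=1$, i.e., $1\leqslant x$, and Lemma 3.3(i) then gives $|x|=1$. For (iii) $\Rightarrow$ (iv) the only content of PA$^-$ beyond A1 is linearity and discreteness. Discreteness is immediate from (iii) and Lemma 3.3(i), as any $0<x<1$ would satisfy $|x|=1$ and hence $1\leqslant x$. For linearity I would invoke the standard $\ell$-ring decomposition: setting $a=x-(x\wedge y)$ and $b=y-(x\wedge y)$ (which exist by Lemma 3.1), the translation-invariance identity $(a+c)\wedge(b+c)=(a\wedge b)+c$ with $c=x\wedge y$ yields $a\wedge b=0$. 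By (iii), each of $a,b$ is zero or normal; if both were normal, monotonicity of $\wedge$ would force $1=1\wedge 1\leqslant a\wedge b=0$, contradicting $|1|=1\neq 0=|0|$. So $a=0$ or $b=0$, i.e., $x\leqslant y$ or $y\leqslant x$.

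The main obstacle is (i) $\Rightarrow$ (ii), since linearity alone is not obviously strong enough to rule out a proper idempotent strictly between $0$ and $1$. Supposing such an $x$ exists, Lemma 3.3(ii) gives $x^2=x$, and I would invoke linearity a second time to dichotomize on whether $1\leqslant 2x$ or $2x\leqslant 1$. In the first subcase, Lemma 3.2(iii) gives $1=|1|\leqslant |2x|$, while A6 applied to $d(2x,2\cdot 0)$ gives $|2x|=|x|$, forcing $|x|=1$; Lemma 3.3(i) then yields $1\leqslant x$, contradicting $x<1$. In the second subcase, $2x\leqslant 1$ combined with Lemma 3.3(ii) gives $(2x)^2=2x$, i.e., $4x^2=2x$; substituting $x^2=x$ collapses this to $2x=0$, whence $x\leqslant x+x=0$ forces $x=0$, contradicting $x>0$.
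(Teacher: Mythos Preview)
Your cycle is correct, and the linearity argument inside (iii)$\Rightarrow$(iv) is essentially the paper's proof of (ii)$\Rightarrow$(i). The genuine divergence is at (i)$\Rightarrow$(ii). The paper writes $1=x+y$, invokes A9 to obtain $|x\vee y|=|x+y|=1$, and then observes that linearity forces $x\vee y\in\{x,y\}$ while both $|x|,|y|<1$. Your route through idempotence of $x$ and the dichotomy on $2x$ versus $1$ avoids A9 entirely, relying instead on A6 (to collapse $|2x|$ to $|x|$) and on the idempotence lemma applied to $2x$; it is a bit longer but equally valid, and it shows that A9 is not needed for this implication. Your (ii)$\Rightarrow$(iii) via A8, giving $|x|=|x\wedge 1|$ in one stroke, is in fact more direct than the paper's detour through A7.

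One point to tighten in (iii)$\Rightarrow$(iv): what you call ``discreteness'' is only the order-theoretic statement (iii)$\Rightarrow$(ii), but PA$^-$ is a \emph{first-order} theory, so you must also check that $M$ is a classical structure, i.e., that $d(x,y)\in\{0,1\}$ for all $x,y$. This is immediate once linearity is in hand: for $x<y$ write $y=x+t$ with $t\neq 0$, whence $d(x,y)=d(0,t)=|t|=1$ by A3 and (iii). The paper states this step explicitly; you should too. (Also, your lemma numbers are shifted: the idempotence lemma you cite as ``3.3'' is Lemma~\ref{normality}, and the monotonicity $x\leqslant y\Rightarrow|x|\leqslant|y|$ you cite as ``3.2(iii)'' is the unnamed lemma in \S2.)
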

\begin{proof}
(i)$\Rightarrow$(ii): Assume $0<x<1$ and take $y$ such that $x+y=1$.
Then, we have that $1=|x+y|=|x\vee y|$. This is however impossible as $x\vee y$ is either $x$ or $y$.

(ii)$\Rightarrow$(i): 
For every $u$ we have that $0\leqslant(u\wedge1)\leqslant1$.
So, $u\wedge1=0$ or $u\wedge1=1$. In the first case, we have that $$|u\wedge1|+|u\vee1|=|u|+1$$
which implies that $u=0$. In the second case, we have that $1\leqslant u$.
We conclude that for every $u$, either $u=0$ or $1\leqslant u$.
Now, let $x,y$ be given. By A10 and the completeness of $M$, there are $t,s$ such that
$$(x\wedge y)+s=x, \ \ \ \ \ \ \ x\wedge y+t=y.$$
If $s=0$ or $t=0$, we are done. Otherwise, $1\leqslant s\wedge t$.
On the other hand, $$(x\wedge y)+(s\wedge t)=((x\wedge y)+s)\wedge((x\wedge y)+t)=x\wedge y$$
which implies that $s\wedge t=0$. This is a contradiction.

(ii)$\Leftrightarrow$(iii): Suppose $0<|x|<1$. Then $|x\wedge1|+|x\vee1|=|x|+1$ and so $0<x\wedge1<1$.
The converse is by Lemma \ref{normality}.

(iii)$\Rightarrow$(iv): $d(x,y)=1$ for every distinct $x,y$.
Moreover, $M$ is linearly ordered. So, $M$ is a model of $\textrm{PA}^-$.

(iv)$\Rightarrow$(i): Obvious.
\end{proof}


\begin{corollary}\label{models PA}
$M\vDash\textrm{AA}$ is a model of PA if and only if it is linearly ordered.
\end{corollary}
\begin{proof}
If $M\vDash\textrm{AA}$ is linearly ordered, it is first order.
Hence, it is a model of PA by Theorem \ref{extremal 2}. The converse is obvious.
\end{proof}

\section{Hierarchy}
By the classical hierarchy theorem, the arithmetic hierarchy of formulas does not collapse modulo PA.
We prove that a similar property holds in the affine case. Set
$$\sup_{x\leqslant t}\phi(x)=\sup_x\phi(x\wedge t)$$
$$\inf_{x\leqslant t}\phi(x)=\inf_x\phi(x\wedge t)$$
where $t$ is any term. Then, $\sup_{x\leqslant t}$ and $\inf_{x\leqslant t}$ are called \emph{bounded quantifiers}.
A formula is \emph{bounded} if all its quantifiers are bounded.
The affine hierarchy of formulas is defined as follows:

- $\Sigma_0=\Pi_0$ is the set of bounded AL-formulas

- $\phi\in\Sigma_{n+1}$ if it is of the form $\sup_{\bar x}\psi$ where $\psi\in\Pi_n$

- $\phi\in\Pi_{n+1}$ if it is of the form $\inf_{\bar x}\psi$ where $\psi\in\Sigma_n$.
\bigskip

If $T$ is an affine theory, we extend a bit the terminology and say that $\phi$ is $\Sigma_n$ (resp. $\phi$ is $\Pi_n$)
if it is $T$-equivalent to a $\Sigma_n$ (resp. $\Pi_n$) formula.
In this case, to emphasize on $T$, we may denote them by $\Sigma_n(T)$ etc. To justify the terminology,
note that we tacitly identify $\sup_x\phi$ with the condition $0\leqslant\sup_x\phi$.
Both $\Sigma_n$ and $\Pi_n$ are closed under addition and multiplication by any $r\geqslant0$.
Also, $\phi\in\Sigma_n$ iff $-\phi\in\Pi_n$.
For a CL-theory $\mathbb{T}$, the classical hierarchy is defined as above except that
AL-formulas are replaced with CL-formulas.  For both hierarchies we have that
$$\Sigma_n\ \subseteq\ \Sigma_{n+1}\cap\Pi_{n+1},\ \ \ \ \ \ \ \Pi_n\ \subseteq\ \Sigma_{n+1}\cap\Pi_{n+1}.$$
It is clear that if $T$ is the affine part of $\mathbb{T}$, then, two affine formulas are $T$-equivalent
if and only if they are $\mathbb{T}$-equivalent.
So, $\Sigma_n(T)\subseteq\Sigma_n(\mathbb{T})$ and $\Pi_n(T)\subseteq\Pi_n(\mathbb{T})$.
Now, assume $\mathbb{T}=\textrm{PA}$ and $T=\textrm{AA}$ (or else, we may assume $\mathbb{T}=Th(\Nn)$
and $T$ is its affine part). Then

\begin{proposition}
For each $n$, $\Sigma_n(T)\nsubseteq\Pi_n(T)$. So, the affine arithmetic hierarchy does not collapse modulo $T$.
\end{proposition}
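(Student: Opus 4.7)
The plan is to transfer the classical first-order hierarchy theorem for PA through the affine translation. By the classical hierarchy theorem, for each $n$ there exists a first-order $\Pi_n$ formula $\phi(\y)$ that is not PA-equivalent to any first-order $\Sigma_n$ formula. I translate $\phi$ into an affine formula $\hat\phi$ by replacing every atomic equation $t_1=t_2$ with $d(t_1,t_2)$, collapsing the quantifier-free kernel into a single bounded affine formula exactly as in the earlier affine-reduction argument (pairing turns conjunctions of atomics into single atomic formulas, and inclusion--exclusion handles disjunctions), and then replacing each $\forall$ by $\sup$ and each $\exists$ by $\inf$. The resulting $\hat\phi=\sup_{\bar x_1}\inf_{\bar x_2}\cdots\hat\psi$ has $n$ alternating quantifier blocks beginning with $\sup$, so $\hat\phi\in\Sigma_n(T)$.

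Suppose for contradiction that $\hat\phi\in\Pi_n(T)$, so that $\hat\phi$ is $T$-equivalent to some $\eta=\inf_{\bar x_1}\sup_{\bar x_2}\cdots\theta(\x,\y)$ with $\theta$ bounded affine. Since $T$-equivalence of affine formulas coincides with $\mathbb{T}=$ PA-equivalence (the remark stated just before the definitions of the hierarchies), $\eta^M=\hat\phi^M$ on every $M\vDash$ PA. But on PA models $\hat\phi^M(\a)$ is the $\{0,1\}$-valued truth indicator of $\phi(\a)$, so $\phi(\a)$ holds in $M$ precisely when $\eta^M(\a)\leqslant\tfrac12$.

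It remains to reinterpret the condition $\eta\leqslant\tfrac12$ as a classical first-order $\Sigma_n$ formula. On any PA model the infimum and supremum of an affine formula over the discrete domain are attained, hence $\inf_x\rho(x)\leqslant c$ is equivalent to $\exists x\,(\rho(x)\leqslant c)$ and $\sup_x\rho(x)\leqslant c$ to $\forall x\,(\rho(x)\leqslant c)$. Iterating this through the $n$ quantifier blocks of $\eta$ turns $\eta\leqslant\tfrac12$ into $\exists\bar x_1\forall\bar x_2\cdots(\theta(\x,\y)\leqslant\tfrac12)$. The kernel condition $\theta\leqslant\tfrac12$ is itself a $\Delta_0$ first-order formula, by a routine induction on $\theta$: a bounded affine formula on PA models takes values in a finite set $\{r_1,\ldots,r_k\}$ determined by the formula; atomic $d(t_1,t_2)$ gives $t_1=t_2$ (or trivially true/false); the connectives $\phi+\psi$ and $r\phi$ produce Boolean combinations of threshold conditions on the summands; and the bounded affine quantifiers $\sup_{x\leqslant t}$, $\inf_{x\leqslant t}$ correspond to the classical bounded quantifiers. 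Hence $\phi$ becomes PA-equivalent to a first-order $\Sigma_n$ formula, contradicting the choice of $\phi$.

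The main technical obstacle is the final induction, which must certify that every bounded affine formula cuts out $\Delta_0$ sets on PA models so that the reassembled condition is authentically first-order $\Sigma_n$; once this is in hand the non-collapse follows mechanically from the classical hierarchy theorem together with the identification of $T$-equivalence with PA-equivalence on affine formulas.
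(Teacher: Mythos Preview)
Your argument is sound in outline and reaches the goal, but by a quite different path from the paper. The paper reasons abstractly through type spaces: if $\Sigma_n(T)\subseteq\Pi_n(T)$ then the affine hierarchy collapses at level $n$, so every affine formula lies in $\Sigma_n(T)$ up to $T$-equivalence; hence $\Sigma_n(T)$-formulas separate all affine types, in particular the extreme ones, which by Theorem~\ref{extremals of AA} are exactly the classical PA-types; since $\Sigma_n(T)\subseteq\Sigma_n(\mathbb T)$, this forces classical $\Sigma_n$-formulas to separate all classical types, contradicting the first-order hierarchy theorem. No formula-by-formula translation is performed. Your route is the concrete syntactic one: pick a first-order $\Pi_n$ witness, translate it forward to an affine $\Sigma_n$ formula, and if that lay in affine $\Pi_n$, translate the hypothetical $\eta$ back to a first-order $\Sigma_n$ equivalent via the finite-range observation. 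Your approach avoids the extreme-type machinery entirely and makes the reduction to the classical theorem visible at the level of a single formula; the paper's approach buys brevity by delegating the affine--classical bridge to the structure theory already in place.

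Two technical points in your forward translation deserve a line each. First, the classical $\Pi_n$ witness $\phi$ comes with a $\Delta_0$ kernel, not a quantifier-free one; you should note that over PA (via collection) one may take $\phi$ in strict form with a genuinely quantifier-free matrix before invoking pairing and inclusion--exclusion. Second, the pairing/inclusion--exclusion reduction from the affine-reduction passage handles only disjunctions of \emph{positive} atomics; to dispose of negations observe that in the lattice language $t_1\neq t_2$ is PA-equivalent to the atomic formula $(t_1\wedge t_2)+1=((t_1\wedge t_2)+1)\wedge(t_1\vee t_2)$, so every literal is already atomic. With these two remarks your $\hat\phi$ really is affine $\Sigma_n$ and $\{0,1\}$-valued on PA-models, and the back-translation of $\eta\leqslant\tfrac12$ into a first-order $\Sigma_n$ formula goes through as you describe.
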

\begin{proof}
Assume $\Sigma_n(T)\subseteq\Pi_n(T)$. Then, $\Sigma_n(T)=\Pi_n(T)$ and hence $\Sigma_n(T)=\Sigma_{m}(T)$
for all $m\geqslant n$. So, up to $T$-equivalence, $\Sigma_n(T)$ is the set of all affine formulas.
This implies that affine types are separated by $\Sigma_n(T)$-formulas.
In particular, extreme types of $T$ (which are exactly the classical types of $\mathbb T$)
are separated by $\Sigma_n$-formulas.
However, by the classical hierarchy theorem, formulas of $\Sigma_n(\mathbb T)$ can not separate all classical types.
\end{proof}

\section{Collection}\label{collection section}
The classical collection axiom for $\phi$ is the universal closure of the following formula:
$$\forall x<t\exists\y\ \phi(x,\y,\z)\rightarrow\exists s\forall x<t\exists\y<s\ \phi(x,\y,\z).$$
The continuous form of collection axiom for a CL-formula $\phi$ is then the following:
$$\inf_{x\leqslant t}\sup_{\y}\phi(x,\y,\z)\leqslant\sup_s\inf_{x\leqslant t}\sup_{\y\leqslant s}\phi(x,\y,\z).$$
Clearly, it holds in models of PA. Its converse is always true. So, we may write it as
$$\inf_{x\leqslant t}\sup_{\y}\phi(x,\y,\z)=\sup_s\inf_{x\leqslant t}\sup_{\y\leqslant s}\phi(x,\y,\z)$$
and deduce that it belongs to AA (i.e. its universal closures).
Following \cite{Kaye}, we denote this condition by $B_\phi$.
Let $$\textrm{Coll}_n=\textrm{AA}^\sim+\{B_\phi:\ \phi\in\Sigma_n\}$$
$$\textrm{Coll}=\bigcup_n\textrm{Coll}_n.$$
The following lemma is then proved by induction on $n$.

\begin{lemma}\label{coll}
$\Sigma_n({\emph{Coll}}_n)$ and $\Pi_n({\emph{Coll}}_n)$ are closed under bounded quantification.
\end{lemma}

Using a stronger form of the collection axioms we can find arbitrarily large Archeamedian models of AA.

\begin{example} \label{examples-bounded}
\em{Let $\mu$ be an ultracharge on the power set of $\Omega$.
Let $$\mathbf{N}=\{a\in\Nn^\mu: \exists k\in\Nn\ \forall i\ \ a_i\leqslant k\}.$$
We prove that $\Nn\preccurlyeq\mathbf{N}\preccurlyeq\Nn^\mu$.
For this purpose, we use the following form of the collection axioms holding in $\Nn$:
$$\forall t\exists s\forall\x<t[\exists y\phi(\x,y)\rightarrow\exists y<s\ \phi(\x,y)].$$
Using the affine language, we may write it for any affine formula $\phi$ as
$$\sup_t\inf_s\sup_{\x\leqslant t}\hspace{.5mm}[\hspace{.5mm}\inf_{y\leqslant s}\phi(\x,y)-\inf_{y}\phi(\x,y)]\leqslant0.$$
Clearly, it is satisfied in $\Nn$. By Tarski's test, we need to prove that for every such
$\phi(\x,y)$ and $\a\in\mathbf{N}$ one has that
$$\inf\{\phi^{\Nn^\mu}(\a,b)\ | \ b\in\mathbf{N}\}=\inf\{\phi^{\Nn^\mu}(\a,c) \ | \ c\in\Nn^{\mu}\}.$$
For simplicity, assume the length of $\x$ is $1$. In the collection axiom above, put $t=n\in\Nn$.
Then, there is $s=m$ such that for all $x\leqslant n$ in $\Nn$ one has that
$$\inf_{y\leqslant m}\phi^\Nn(x,y)\leqslant\inf_{y}\phi^\Nn(x,y).$$
Take an arbitrary $a\in\mathbf{N}$ such that $a_i\leqslant n$ for all $i\in\Omega$.
Then, for every $i$ there is $b_i\leqslant m$ such that $$\phi^{\Nn}(a_i,b_i)\leqslant\inf_y\phi^{\Nn}(a_i,y)$$
Let $b=[b_i]$. Then, by integrating, we have that
$$\phi^{\Nn^\mu}(a,b)=\int\phi^{\Nn}(a_i,b_i)d\mu\leqslant
\int\inf_y\phi^{\Nn}(a_i,y)d\mu=\inf_y\phi^{\Nn^\mu}(a,y).$$}
\end{example}

\section{Least number principle}
Let $\phi(x,\y)$ be an affine formula. As stated before, there are $r_1<\cdots<r_k$ such that
the range of $\phi^M(x,\y)$ is included in $\{r_1,...,r_k\}$ for every first order structure $M$.
In models of PA, the condition
$$\sup_x\phi(x,\y)\leqslant\sup_{x}\ [2\phi(x,\y)-\sup_{t\leqslant x}(\phi(t,\y)+\lambda d(t,x))]
\hspace{14mm}(L^1_x\phi)$$
states that there exists $a$ such that
$$\sup_x\phi(x,\y)\leqslant\phi(a,\y)-\sup_{t\leqslant a}(-\phi(a,\y)+\phi(t,\y)+
\lambda d(t,a))\leqslant\phi(a,\y).\ \ \ \ \ (\star)$$
Equivalently,
$$\phi(a,\y)=\sup_x\phi(x,\y) \ \ \ \ \& \ \ \ \ \phi(t,\y)\leqslant\phi(a,\y)-\lambda d(t,a)
\ \ \ \ \forall t<a.\ \ \ \ \ \ \ (\star\star)$$
By the least number principle in models of PA, such an $a$ exists if
$$\lambda=\min\{r_i-r_{i-1}:\ 1<i\leqslant k\}$$
(or $\lambda=1$ if $k=1$, in which case $a=0$ works).
We conclude that for suitable $\lambda>0$, the condition $(L^1_x\phi)$ holds in all models of AA.
On the other hand, let
$$\psi(z,\y)=\sup_x\phi(x,\y)-[2\phi(z,\y)-\sup_{t\leqslant z}(\phi(t,\y)+\lambda d(t,z))].$$
Then, $0\leqslant\inf_{z\y}\psi^M(z,\y)$ for any $M\vDash\textrm{PA}$.
Since, otherwise, as in $(\star)$ for some $a,\y$ in $M$ one must have that
$$\sup_x\phi^M(x,\y)<\phi^M(a,\y)$$
which is impossible. Furthermore, in such $M$,\ \ $\psi^M(a,\y)=0$ if and only if $(\star\star)$ holds.
If $k=1$, set $\gamma=0$. Otherwise, there is a minimum $\gamma>0$ in the ranges of
$\psi^M(x,\y)$ when $M$ varies among models of PA.
Then, by the uniqueness of such $a$, we conclude that
$$\gamma d(z,u)\leqslant\psi(z,\y)+\psi(u,\y) \hspace{18mm} (L^2_x\phi)$$ holds in all models of PA.
The family of conditions $(L^1_x\phi)$ and $(L^2_x\phi)$ (for all $\phi$) is called the \emph{least number axioms}.
These conditions belong to AA and guarantee the existence and uniqueness of $a$ stated above in models of AA.
We have therefore the following form of the \emph{least number principle} (LNP).

\begin{proposition} \label{least number}
Let $M\vDash\textrm{AA}$ and $\phi(x,\y)$ be a formula.
Then, for each $\b$ there is a unique $a$ such that
$$\phi^M(a,\b)=\sup_x\phi^M(x,\b),\ \ \ \ \ \ \ \ \ \ \ \phi^M(t,\b)<\phi^M(a,\b) \ \ \ \ \ \forall t<a.$$
Moreover, the map $\b\mapsto a$ given above is definable.
\end{proposition}
\begin{proof}
For the existence, take an extremally $\aleph_0$-saturated extension $M\preccurlyeq N$.
Such $a$ exists in $N$ by $(L^1_x\phi)$. Moreover, by $(L^2_x\phi)$, it is unique.
Hence it is in the definable closure of $\b$. We conclude that $a\in M$.
\end{proof}

\begin{corollary} \label{prime model}
Let $A\subseteq M\vDash\textrm{AA}$. Then, $\textrm{dcl}(A)\preccurlyeq M$.
In particular, $\textrm{dcl}(\emptyset)\preccurlyeq N$ for every $N\equiv M$.
\end{corollary}
\begin{proof}
It is clear that $\textrm{dcl}(A)$ is a substructure of $M$. It is also metrically closed.
We use Tarski's test to show that $\textrm{dcl}(A)\preccurlyeq M$.
Assume $\b\in \textrm{dcl}(A)$ and $r=\sup_x\phi^M(x,\b)$. Let $a$ be as in Proposition \ref{least number}.
Then, $a\in acl(A)$ and $r\leqslant\phi^M(a,\b)$.
For the second part, use the affine elementary amalgamation property to find $\bar M$ with
$M\preccurlyeq\bar M$ and $N\preccurlyeq\bar M$. Then, $\textrm{dcl}(\emptyset)\preccurlyeq\bar M$.
\end{proof}

It is also clear that $dcl(\emptyset)$ in Corollary \ref{prime model} is rigid and has no proper elementary substructure.
In particular, $Th(M)$ has a prime model. Similar result hold for $Th(M,a)_{a\in A}$ whenever
$M\vDash\textrm{AA}$ and $A\subseteq M$. In models of $\textrm{PA}^-$,\ \ LNP and induction axioms are equivalent.
The following proposition states that $\textrm{AA}^\sim+$ LNP implies weak induction axioms.

\begin{proposition}
Let $M\vDash\textrm{AA}^\sim$ and assume that LNP holds in $M$.
Then, for each $\phi(x,\y)$, there is $\alpha\geqslant0$ such that $M$ satisfies the condition:
$$\sup_x\phi(x,\y)\leqslant\phi(0,\y)+\alpha\cdot\sup_{xu}(\phi(x+(u\wedge1),\y)-\phi(x,\y)).$$
\end{proposition}
\begin{proof}
We may assume without loss that $M$ is $\aleph_0$-saturated and $\phi^M(0)=0$.
Also, assume for simplicity that $\y=\emptyset$. Let
$$\Delta=\sup_{xu}(\phi^M(x+(u\wedge1))-\phi^M(x)).$$
Since $\phi^M(x)$ is bounded, $\Delta\geqslant0$. We have then two cases: (i) $\Delta=0$,
in which case we set $\alpha=0$ and we must have that $\sup_x\phi^M(x)\leqslant0$.
Otherwise, $\sup_x\phi^M(x)=r>0$. Then, there is $a$ such that $\phi^M(a)=r$ and $\phi^M(t)<r$ whenever $t<a$.
Since $a\neq0,$ we may write $a=c+u$ where $u=a\wedge1$ is a nonzero idempotent. So, $c<a$ and
$$\phi^M(a)=\phi^M(c+u)\leqslant\phi^M(c)<\phi^M(a).$$
This is a contradiction. (ii) $\Delta>0$. In this case, the claim holds with $\alpha=\frac{2\mathsf{b}_\phi}{\Delta}$.
\end{proof}

\section{Cut}
Proofs in this and the next sections are usually adaptations of the classical proofs
for the affine situation. They are mostly taken from \cite{Kaye}. Let $M$ be a model of AA
(usually $\textrm{AA}^\sim+$ LNP is sufficient).

\begin{lemma}
Let $I\subseteq M$ be nonempty and $u$ be fixed. Assume $x\in I$ implies $x+u\in I$ for every $x$.
Then $d(a+u,I)\leqslant d(a,I)$ for every $a$. The converse is true if $I$ is metrically closed.
\end{lemma}
\begin{proof}
Set $$I+u=\{x+u: x\in I\}.$$
Then, $I+u\subseteq I$ and $d(a+u,I)\leqslant d(a+u,I+u)=d(a,I)$.
\end{proof}

Then, using Corollary \ref{induction applications}, one proves that

\begin{corollary}\label{Definable-induction1}
Let $D\subseteq M\vDash\textrm{AA}$ be simply definable.
Assume $0\in D$ and for all $x$,\ \ $x\in D$ implies $x+1\in D$. Then $D=M$.
\end{corollary}

\begin{definition}
\emph{A \emph{cut} in $M$ is a nonempty set $I\subseteq M$ such that for all $x,y$,
\\ (i) if $x\leqslant y\in I$ then $x\in I$ (ii) if $x\in I$ then $x+1\in I$.
A cut $I$ is \emph{directed} if $x\vee y\in I$ whenever $x,y\in I$.}
\end{definition}

If $I$ is a cut, then $x\in I$ implies $x+u\in I$ for any idempotent $u$.
So, $d(a+u,I)\leqslant d(a,I)$ for all $a$.
If $I<I^c$, then $I$ is directed and moreover, $x\wedge y\in I^c$ whenever $x,y\in I^c$.
The topological closure of a cut is a cut.

\begin{example}\label{examples-cut}
\em{(i) Let $\mu$ be an ultracharge on $\Nn$ and 
fix $A\subseteq\Nn$. Let
$$I_A=\{[a_i]\in\Nn^\mu:\ \exists r\in\Nn\ \forall i\in A\ \ a_i\leqslant r\}.$$
Then, $I_A$ is a cut. Assume $\mu$ is not an ultrafilter and $\mu(i)=0$ for every $i$.
If $A=\Nn$, then $I=I_A$ is a bounded cut i.e. $I<b$ for some $b$ (take $b_i=i$).
It is however not the case that $I<I^c$ (neither $1\leqslant I^c$).
If $A=2\Nn$ and $\mu(A)=\frac{1}{2}$, then $I_A$ is proper but not bounded.
Also, in this case, if $B=\Nn-A$, then $I_A\cup I_B$ is a cut which is not directed.
\bigskip

\noindent(ii) In example (i), let $I=I_\Nn$ and assume $\mu(i)=2^{-i-1}$. Let $a_k=(a_{ki})$
where $a_{ki}=i$ if $i\leqslant k$ and $a_{ki}=k$ if $i>k$.
Then, $a_k\in I$ converges to a point outside $I$. So, $I$ is not topologically closed.
\bigskip

\noindent(iii) Let $\mu$ be as in example (ii) and $K\vDash\textrm{PA}$ be nonstandard.
Let $$I=\{[a_i]\in K^\mu:\ \exists i\ a_i\in\Nn\}.$$
Then, $I$ is a cut in $K^\mu$ and one has that $1\leqslant I^c$.
\bigskip

\noindent(iv) Let $\Nn[x]$ be the set of polynomials with coefficients in $\Nn$. Then the set
$$I=\{[a_i]\in\Nn^\mu:\ \exists f(x)\in\Nn[x]\ \forall i\ a_i\leqslant f(i)\}\subseteq\Nn^\mu$$
is a bounded cut if $\mu(i)=0$ for every $i$.
\bigskip

(v) If $M$ is not linearly ordered, then $\Nn$ is not an initial segment of $M$.}
\end{example}

\begin{proposition}
Let $I\subseteq M$ be a proper cut.

(i) $I$ is not simply definable.

(ii) If $1\leqslant I^c$, then $I^c$ is not simply definable.
\end{proposition}
\begin{proof}
(i) Assume it is simply definable. Since $I\neq M$, there exists $a$ such that
$$0<\sup_xd(x,I)=d(a,I),\ \ \ \ \ \ \ \ \ \ \ d(t,I)<d(a,I) \ \ \ \ \forall t<a.$$
So, $a\not\in I$ and hence $a=b+u$ for some $b$ and nonzero idempotent $u=a\wedge1$.
Therefore, $b\not\in I$ and $$d(a,I)\leqslant d(b,I)<d(a,I).$$ This is a contradiction.

(ii) By the assumption, for each $i$,\ \  $i\in I^c$ if and only if $i-1\in I^c$.
It is then easily proved that $$d(x,I^c)=d(x+1,I^c)\ \ \ \ \ \ \ \forall x.$$
By the least number axioms, there exists $a$ such that
$$0=\inf_xd(x,I^c)=d(a,I^c),\ \ \ \ \ \ \ \ \ \ \ \ \ \ \ d(a,I^c)<d(t,I^c)\ \ \ \forall t<a.$$
So, $a\in I^c$ and $d(a,I^c)<d(a-1,I^c)\leqslant d(a,I^c)$. This is a contradiction.
\end{proof}

For $D\subseteq M$ and $\epsilon>0$ set $D^\epsilon=\{x\in M:\ d(x,D)<\epsilon\}$.

\begin{proposition} (Overspill)
Let $I$ be a proper cut and $D\subseteq M$ be simply definable. Assume $I\subseteq D$.
Then, for all $\epsilon>0$, there exists $a\in I^c$ such that $[0,a]\subseteq D^\epsilon$.
\end{proposition}
\begin{proof}
Assume not. Then, there is $\epsilon>0$ such that for every $a\in I^c$ one has that
$[0,a]\not\subseteq D^\epsilon$. Set $$P(x)=\sup_{y\leqslant x}d(y,D)=\sup_yd(x\wedge y,D).$$
Then, $P(x)$ is a formula and $I=Z(P)$. Moreover,
$$d(x,I)\leqslant\frac{1}{\epsilon}P(x)\ \ \ \ \ \ \ \ \forall x.$$
So, $I$ is simply definable. This is a contradiction.
\end{proof}

The following is an other variant of overspill which holds with some additional requirements.

\begin{proposition} (Overspill)
Let $D\subseteq M$ be simply definable and $I$ be a proper cut with $I<I^c$.
Assume for every $a\in I$ there is $a\leqslant u\in I\cap D$.
Then, for every $\epsilon>0$ and $b\in I^c$ there is $b\geqslant v\in I^c\cap D^\epsilon$.
\end{proposition}
\begin{proof}
Assume not. Then, there are $0<\epsilon<1$ and $b\in I^c$ for which there is no $v$ with
$$b\geqslant v\in I^c\cap D^\epsilon.$$ Hence, $[0,b]\subseteq I\cup(D^\epsilon)^c$.
Let
$$P(x)=\inf_y d((x\vee y)\wedge b, D).$$ $P$ is a formula.
Moreover, by the assumptions, $I\subseteq Z(P)$. Conversely, suppose that $x\in I^c$.
Then, for every $y$ $$b\geqslant(x\vee y)\wedge b=(x\wedge b)\vee(y\wedge b)\in I^c$$
which shows that $(x\vee y)\wedge b\in(D^\epsilon)^c$. Hence, $\epsilon\leqslant P(x)$.
We conclude that $Z(P)=I$. Furthermore,
$$d(x,Z(P))\leqslant\frac{1}{\epsilon}P(x)\ \ \ \ \ \ \ \ \forall x.$$
So, $I$ is simply definable. This is a contradiction.
\end{proof}

\begin{proposition} (Underspill)
Let $I$ be a proper cut and $D\subseteq M$ be simply definable. 

(i) Assume $1\leqslant I^c\subseteq D$. Then, for each $\epsilon>0$ there exists $a\in I$ such that $[a,\infty)\subseteq D^\epsilon$.

(ii) Assume $I<I^c$. If for every $a\in I^c$ there is $a\geqslant u\in I^c\cap D$,
then every $\epsilon>0$ and $b\in I$ there is $b\leqslant v\in I\cap D^\epsilon$.
\end{proposition}
\begin{proof}
(i) Assume not and set $$P(x)=\sup_{x\leqslant y}d(y,D)$$ 
Then, $P(x)$ is formula and $I^c=Z(P)$. Moreover,
$$d(x,Z(P))\leqslant\frac{1}{\epsilon}P(x)\ \ \ \ \ \ \ \ \forall x.$$
Hence, $I^c$ is simply definable. This is a contradiction.

(ii): Assume not. Then, there are $\epsilon>0$ and $b\in I$ such that $[b,\infty)\cap I\cap D^\epsilon=\emptyset$.
Set $$P(x)=\inf_yd(x\wedge y)\vee b,D)$$  
Then $P$ is a formula and $I^c\subseteq Z(P)$. Conversely, if $x\in I$, then for every $y$
$$b\leqslant(x\wedge y)\vee b=(x\vee b)\wedge(y\vee b)\in I$$
which shows that $(x\wedge y)\vee b\not\in D^\epsilon$. Hence, $d((x\wedge y)\vee b, D)\geqslant\epsilon$.
We conclude that $I^c=Z(P)$ and $$d(x,Z(P))\leqslant\frac{1}{\epsilon}P(x)\ \ \ \ \ \ \ \ \forall x.$$
This is a contradiction since $I^c$ is not simply definable.
\end{proof}

\section{Division and coding}
In this section, to guarantee the existence of certain points, we assume that $M\vDash\textrm{AA}$ is
extremally $\aleph_0$-saturated. However, as stated before, as long as uniqueness is concerned,
extremal saturation is unnecessary. We will examine some number theoretic properties in affine models.
As stated before, the easiest way to prove a claim in AA is to show that it is an affine consequence of PA.
For example, $1$ is the only invertible element in models of AA since the
following condition holds in models of PA: $$d(x,1)+d(y,1)\leqslant2d(1,xy).$$

The Euclidean division algorithm in classical models can be restated by the affine condition
$$\inf_{uv}[d(x,yu+v)+d(y\wedge(v+1),v+1)]\leqslant1-|y|\ \ \ \ \ \ \ \ \ \forall xy.$$
It states that if $y\neq0$, there are $u,v$ such that $x=yu+v$ and $v<y$.
So, correspondingly, it holds in models of affine arithmetic and states that
if $y$ is normal, there are $u,v$ such that $x=yu+v$ and $v+1\leqslant y$ (or that $v\leqslant y-1$).
We prove that $u,v$ are unique and definable from $x,y$.

\begin{proposition}
There are definable functions $f$, $g$ on the set of pairs $(x,y)$ where $y$ is normal
such that for every such pair $x=y\cdot f(y,x)+g(y,x)$ and $g(y,x)+1\leqslant y$.
\end{proposition}
\begin{proof} Let $$\theta_{xy}(u,v)=d(x,yu+v)+d(y\wedge(v+1),v+1)+|y|-1.$$
In models of PA,\ \ $\theta_{xy}(u,v)$ can only take the values $0,1,2$.
Also, if $y\neq0$, then $\theta_{xy}(u,v)=0$ exactly when $x=yu+v$ and $v<y$.
So, by the uniqueness of division in PA, the following condition holds in PA:
$$d(u,u')+d(v,v')\leqslant 2\theta_{xy}(u,v)+2\theta_{xy}(u',v').\hspace{18mm} (*)$$
Correspondingly, in a model $M\vDash\textrm{AA}$, if $y$ is normal, then $\theta_{xy}(u,v)=0$ exactly
when $x=yu+v$ and $v+1\leqslant y$.
Moreover, if $\theta_{xy}(u,v)=\theta_{xy}(u',v')=0$ in $M$, then by $(*)$
we must have that $u=u'$ and $v=v'$.
Now, for any normal $y$ define $f(x,y)=u$ and $g(x,y)=v$ iff $\theta_{xy}(u,v)=0$.
The graphs of $f$ and $g$ are type-definable. Hence, $f,g$ are definable.
\end{proof}

The remainder function ($g(x,y)$ above) is usually denoted by $\big(\frac{x}{y}\big)$.
Coprimality is an other number theoretic notion. One verifies easily that nonzero elements
$a,b\in M\vDash\textrm{PA}$ are \emph{coprime} if and only if
$$M\vDash\ d(z,1)\leqslant\inf_wd(wz,a)+\inf_wd(wz,b)\ \ \ \ \ \ \ \ \forall z\in M.$$
So, we can take it as a definition of coprimality in models of AA for normal elements $a,b$.
As usual, if $m$ is normal, $x\equiv y$ (mod $m$) means that $\big(\frac{x}{m}\big)=\big(\frac{y}{m}\big)$.

\begin{lemma} (B\'{e}zout's theorem)
For any normal $x,y\in M\vDash\textrm{AA}$, if $(x,y)=1$, then there is $z\leqslant y-1$ such that $zx\equiv1$ (mod $y$).
\end{lemma}
\begin{proof}
Consider the condition
$$\inf_{z\leqslant y-1}d\big(\Big(\frac{zx}{y}\Big),\Big(\frac{1}{y}\Big)\big)
\leqslant\sup_{z}\ [d(z,1)-\inf_wd(wz,x)-\inf_wd(wz,y)].$$
Let $M\vDash\textrm{PA}$. The right-hand side of inequality is always nonnegative (put $z=1$).
Indeed, inside the crochets it is $0$ if $(x,y)=1$ and is $1$ otherwise.
In particular, the condition is always true. Moreover, if $(x,y)=1$, then
the left-hand side must be $0$ by the classical B\'{e}zout theorem.
Since the condition holds in models of PA, we conclude that it holds in models of AA too.
Now, assume $M\vDash\textrm{AA}$ and $x,y\in M$ are normal. If $(x,y)=1$, then (by definition)
the right-hand side is $0$. So, the left-hand side is zero. Therefore, there is
$z\leqslant y-1$ such that $zx\equiv1$ (mod $y$).
\end{proof}

\begin{lemma} (Chinese remainder theorem)
Let $M$ be a model of AA and $m_0,...,m_{n-1}$ be normal and pairwise coprime.
Then, for every $x_0,...,x_{n-1}$ there is $y$ such that
$$y\equiv x_i\ \ (\mbox{mod}\ m_i)\ \ \ \ \ \ \ \ i=0,...,n-1.$$
\end{lemma}
\begin{proof}
We assume $n\geqslant2$. Let $$\theta(u,v)=\sup_z[d(z,1)-\inf_wd(wz,u)-\inf_wd(wz,v)]+2-|u|-|v|.$$
Then, in any model of PA, if $u,v$ are nonzero and $(u,v)=1$, then $\theta(u,v)=0$.
In any other case (consisting of the case $uv=0$),\ \ $1\leqslant\theta(u,v)$.
Briefly, always $0\leqslant\theta(u,v)$ and $0=\theta(u,v)$ if and only if $u,v$ are nonzero and coprime.
On the other hand, the condition
$$\sup_{x_0...x_{n-1}}\inf_y\ \sum_{i=0}^{n-1}\ \inf_td(y,tm_i+x_i)\
\leqslant\ n\cdot\sum_{i<j}\theta(m_i,m_j)\ \ \ \ \ \ \ \ (*)$$
holds in models of PA in the following cases:\\
(i) $m_i=0$ for some $i$\\ (ii) $m_i\neq0$ for all $i$ and $m_i,m_j$ are not coprime for some $i<j$.

In contrast, if $m_i$'s are nonzero and pairwise coprime, then the righthand side of the above inequality is zero.
In this case, that the condition holds means that for every $x_0,...,x_{n-1}$ there is $y$ such that
$$y\equiv x_i\ \ (\mbox{mod}\ m_i)\ \ \ \ \ \ \ \ \ i=0,...,n-1.$$
We conclude that the above condition as well as the condition $0\leqslant\inf_{uv}\theta(u,v)$
hold always in models of PA, hence belong to AA.
Now assume $M$ is a model of AA. Assume $m_0,...,m_{n-1}\in M$ are normal and pairwise coprime.
Then, $\theta(m_i,m_j)=0$ whenever $i\neq j$.
Therefore, by $(*)$ (and the metric completeness of $M$), there exists $y\in M$ such that
$$y\equiv x_i\ \ (\mbox{mod}\ m_i)\ \ \ \ \ \ \ \ \ i=0,...,n-1.$$
\end{proof}

By definition, $p\in M\vDash\textrm{PA}$ is irreducible (resp. prime) if $p\geqslant2$ and for every $x,y$,\ \
$p=xy$ implies that $p=x$ or $p=y$ (resp. $p|xy$ implies that $p|x$ or $p|y$).
These notions are equivalent in models of PA. Let $$\textrm{irred}(p)=\sup_{xy}[d(p,x)+d(p,y)-d(p,xy)]-1$$
$$\textrm{prime}(p)=\sup_{xy}\ [\inf_{t}d(pt,x)+\inf_{t}d(pt,y)-\inf_{t}d(pt,xy)]-1$$
Then, for any $2\leqslant p\in M\vDash\textrm{PA}$ one has that $\textrm{irred}(p)=0$ if and only if $p$ is irreducible.
Otherwise, $\textrm{irred}(p)=1$. Similarly, for any $2\leqslant p\in M\vDash\textrm{PA}$ one has that
$\textrm{prime}(p)=0$ if and only if $p$ is prime. Otherwise, $\textrm{prime}(p)=1$.
Therefore, correspondingly, for any $2\leqslant p\in M\vDash\textrm{AA}$ we have that
$0\leqslant\textrm{irred}(p)$ and $0\leqslant\textrm{prime}(p)$.
We then say $p\geqslant 2$ is \emph{irreducible} if $\textrm{irred}(p)=0$ and it is
\emph{prime} if $\textrm{prime}(p)=0$.

\begin{lemma}
$2\leqslant p\in M\vDash\textrm{AA}$ is irreducible if and only if it is prime.
\end{lemma}
\begin{proof}
By the above explanation, if $M\vDash\textrm{PA}$, then $M$ satisfies
$$\textrm{irred}(x)=\textrm{prime}(x)\hspace{15mm} \forall x\geqslant2.$$
We conclude that this condition belongs to AA.
Hence, being prime or irreducible in models of AA are equivalent.
\end{proof}

Note that the classical definitions does not match here. For example, it is not hard to verify
that $p=(2,2)\in\frac{1}{2}\Nn+\frac{1}{2}\Nn$ is irreducible in the affine sense.
However, one has that $$p=(2,1)\cdot(1,2).$$

As stated before, if $K$ has the end-set property and $K\preccurlyeq M$ is extremally
$\aleph_0$-saturated, then $M$ has the end-set property.
We conclude that if $M\vDash\textrm{AA}$ is saturated and has a first order elementary substructure,
then prime numbers form a definable set in $M$.
This raises the question of whether every saturated $M\vDash\textrm{AA}$ has a first order elementary substructure.
By Corollary \ref{prime model}, this is true if $\textrm{dcl}_M(\emptyset)$ is first order.

\begin{lemma}
If $2\leqslant x$, then there is an irreducible $p$ such that $p|x$.
\end{lemma}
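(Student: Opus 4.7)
The plan is to follow the transfer-from-PA template used throughout this section. First I would write down the affine condition
$$\inf_p \bigl[\textrm{irred}(p) + d(p\wedge 2, 2) + \inf_t d(pt, x)\bigr] \leqslant d(x\wedge 2, 2),$$
whose intended reading is: either the right-hand side compensates because $x<2$ in a classical model, or $x$ admits an irreducible divisor $p$ with $2\leqslant p$. The slack $d(p\wedge 2,2)$ inside the infimum is essential because the definition of irreducibility used in this section is restricted to $p\geqslant 2$; this term forces any approximate minimizer to satisfy $2\leqslant p$.

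Next, I would verify the displayed condition in every model of PA by a case split on $x$. If $2\leqslant x$ classically, then by the usual least-number argument applied to the set of divisors of $x$ lying in $[2,\infty)$ one obtains an irreducible $p\geqslant 2$ with $p\mid x$, so every summand on the left-hand side vanishes and both sides equal $0$. If $x\in\{0,1\}$, the right-hand side equals $1$, and taking $p=2$ one checks the left-hand side is at most $1$ as well: the only nontrivial summand is $\inf_t d(2t,x)$, which is $0$ for $x=0$ and $1$ for $x=1$. By the transfer pattern invoked earlier in the paper, an affine condition that holds in every model of PA belongs to AA.

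Finally, take $M\vDash\textrm{AA}$ extremally $\aleph_0$-saturated (as assumed throughout Section 5), and let $2\leqslant x\in M$. Then $d(x\wedge 2,2)=0$, so the left-hand infimum equals $0$. Since each of the three summands is nonnegative in AA (the nonnegativity of $\textrm{irred}$ being itself transferred from PA, and $d(p\wedge 2, 2)$ being a distance), any minimizer makes each summand vanish. Extremal $\aleph_0$-saturation combined with the least number principle (Proposition \ref{least number}) delivers such a minimizer $p\in M$; the vanishing of $d(p\wedge 2,2)$ and $\textrm{irred}(p)$ certify that $p$ is irreducible in the sense of this section, while $\inf_t d(pt,x)=0$ is realized, by extremal saturation again (in the same spirit as the end of the proofs of B\'ezout's theorem and the Chinese remainder theorem above), by some $t\in M$ with $pt=x$, giving $p\mid x$. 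The main obstacle I anticipate is precisely this nonnegativity bookkeeping: without the slack term $d(p\wedge 2,2)$ a minimizer could conceivably land in the region $p<2$ where the sectional definition of irreducibility does not apply, even though the transferred inequality remains valid.
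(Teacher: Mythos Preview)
Your proof is correct and follows the same transfer-from-PA template as the paper; the only difference is cosmetic. The paper encodes the constraint $2\leqslant p$ by a bounded quantifier, writing
\[
\inf_{2\leqslant p}\bigl[\textrm{irred}(p)+\inf_t d(pt,x)\bigr]\leqslant 2\,d(x\wedge 2,2),
\]
whereas you encode it by the slack term $d(p\wedge 2,2)$ inside an unrestricted infimum. Both conditions are checked in PA by the same case split and both yield the desired irreducible divisor in an extremally $\aleph_0$-saturated model of AA; your write-up simply spells out the realization step (nonnegativity of each summand, then extremal saturation to extract $p$ and $t$) that the paper leaves implicit. The appeal to Proposition~\ref{least number} is more than you need --- bare extremal $\aleph_0$-saturation already guarantees the infimum is attained --- but it does no harm.
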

\begin{proof}
As stated above we have always that $\textrm{irred}(p)\leqslant1$.
It is then easily verified that in models of PA the condition
$$\inf_{2\leqslant p}\ [\textrm{irred}(p)+\inf_td(pt,x)]\leqslant2d(2\wedge x,2)$$
holds for $x\geqslant2$ if and only if there is an irreducible $p$ which divides $x$.
The claim then follows from the fact that this condition holds in models of AA too.
\end{proof}

The unboundedness of primes can be proved by an affinization of the classical proof.
It is however easier to use the following affine condition which is satisfied in every model of PA:
$$\sup_x\inf_y[d(x\wedge y,x)+\textrm{prime}(y)]\leqslant0.$$
Applying this for $M$, we conclude that for every $x\in M$ there is a prime $p\geqslant x$.

The irrationality of $\sqrt 2$ is also proved in AA regarding the fact that PA satisfies
the condition $$d(x,y)\leqslant d(x^2,2y^2).$$
In fact, this implies that if $x^2=2y^2$, then $x=y=0$.

Although it is not true that every element in $M$ is either even or odd,
it is  true that $x(x+1)$ is always even. This is because PA proves the condition
$$\inf_u d(2u,x(x+1))=0.$$
Indeed, there is a unique $u$ with $2u=x(x+1)$ and the function $x\mapsto\frac{1}{2}x(x+1)$
is definable. 
We can therefore set
$$\langle x,y\rangle=\frac{(x+y+1)(x+y)}{2}+y$$
which is again a definable function. 
Moreover, the affine condition $$\sup_z\inf_{xy}d(\langle x,y\rangle,z)\leqslant0$$
shows that the function $\langle x,y\rangle$ is surjective.
It is also injective since the affine condition
$$d(u,v)+d(x,y)\leqslant 2d(\langle u,v\rangle,\langle x,y\rangle)$$
holds in models of PA. Note also that these facts are expressible by conditions
of the form $\phi\leqslant0$ where $\phi$ is $\Sigma_2$.
This will be used later in the proof of Theorem \ref{Gaifman1}.
Since $\langle x,y\rangle$ is bijective, its inverse $\langle x,y\rangle\mapsto(x,y)$ is definable.
Now, if $x=\langle a,m\rangle$, set $$(x)_i=\Big(\frac{a}{m(i+1)+1}\Big).$$
Since the remainder function is definable, $(x)_i$ is a definable function of the pair $(x,i)$.
Anyway, we have verified that the function $(x)_i$ is defined in the framework of AA. The problem is now to
prove that it has the required properties. Let us recall the classical result in this respect.

\begin{lemma}
PA satisfies the following sentences:

(i) $\forall x\exists y (y)_0=x$

(ii) $\forall xyz\exists w[\forall i<z((w)_i=(y)_i)\wedge(w)_z=x]$

(iii) $\forall xy\ (x)_y\leqslant x$.
\end{lemma}

Since $<$ is not expressible here, we must make minor changes in (ii).
We have then the following lemma.

\begin{lemma} \label{coding lemma} AA satisfies the following conditions:

(i) $\sup_x\inf_y d((y)_0,x)\leqslant0$

(ii) $\sup_{xy}\sup_{1\leqslant z}\inf_w[\sup_{i\leqslant z-1}d((w)_i,(y)_i)+d((w)_z,x)]\leqslant0$

(iii) $\sup_{xy}d((x)_y\vee x,x)=0$
\end{lemma}
\begin{proof}
All these conditions hold in models of PA.
\end{proof}

We can rewrite Lemma \ref{coding lemma} as follows:

(i) $\forall x\exists y (y)_0=x$

(ii) $\forall xyz[1\leqslant z\rightarrow\exists w(\forall i\leqslant z-1((w)_i=(y)_i)\wedge(w)_z=x)]$

(iii) $\forall xy\ (x)_y\leqslant x$.
\bigskip

As in the classical case, the relation $(x)_y=z$ is stated by an affine condition of the form
$\theta=0$ where $\theta$ is a $\Delta_0$-formula (see also \cite{Kaye} p. 64).

\begin{lemma} (G\"{o}del's lemma)
Let $x_0,...,x_{n-1}\in M$. Then, there exists $u\in M$ such that
$$M\vDash(u)_i=x_i\ \ \ \ \ \ \ \ \ \mbox{for}\ i=0,...,n-1.$$
\end{lemma}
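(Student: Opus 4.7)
The plan is to prove this by external induction on $n$, mimicking the classical construction but extracting exact witnesses via the extremal $\aleph_0$-saturation of $M$, which is the standing hypothesis of this section.

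For the base case $n=1$, I would invoke Lemma \ref{coding lemma}(i): specializing gives $\inf_y d((y)_0, x_0) = 0$, and since $y \mapsto d((y)_0, x_0)$ is a definable predicate with parameter $x_0 \in M$, its infimum is attained at some $u \in M$. Any such $u$ satisfies $(u)_0 = x_0$.

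For the inductive step, assume $1 \leqslant k \leqslant n-1$ and that some $v \in M$ already satisfies $(v)_i = x_i$ for $i = 0, \ldots, k-1$. Apply Lemma \ref{coding lemma}(ii) with $x := x_k$, $y := v$, and $z := k \cdot 1 \in M$ (the hypothesis $1 \leqslant z$ holds because $k \geqslant 1$). This yields
$$\inf_w \Big[\sup_{i \leqslant k-1} d((w)_i, (v)_i) + d((w)_k, x_k)\Big] = 0.$$
Again by extremal $\aleph_0$-saturation this infimum is realized by some $u \in M$; such a realizer satisfies $(u)_i = (v)_i = x_i$ for $i \leqslant k-1$ together with $(u)_k = x_k$, giving the inductive hypothesis at stage $k+1$. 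After $n-1$ iterations we obtain an element $u$ with $(u)_i = x_i$ for all $i = 0, \ldots, n-1$.

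The only subtle point, and in that sense the main obstacle, is that Lemma \ref{coding lemma} guarantees only that certain infima vanish, not that witnesses exist. Converting approximate solutions into exact ones is precisely where the extremal $\aleph_0$-saturation assumption is used (via the observation, recalled in Section 1, that in an extremally $\aleph_0$-saturated model the extremum of a formula with parameters is attained). Beyond this, the argument is a straightforward finite iteration parallel to the classical proof in \cite{Kaye}, with clause (i) handling initialization and clause (ii) handling extension of the coded sequence by one new entry at each step.
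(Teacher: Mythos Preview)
Your proof is correct, but it takes a different route from the paper. The paper offers two one-step arguments: either invoke the Chinese remainder theorem, or simply observe that for each fixed $n$ the single affine condition
\[
\sup_{x_0\cdots x_{n-1}}\inf_u\big[d((u)_0,x_0)+\cdots+d((u)_{n-1},x_{n-1})\big]\leqslant 0
\]
holds in every model of PA, hence in AA, and then use extremal $\aleph_0$-saturation to realize the infimum. Your approach instead builds the code iteratively by external induction on $n$, using clauses (i) and (ii) of Lemma~\ref{coding lemma} for initialization and one-step extension respectively. This is closer to the classical textbook construction (e.g.\ in \cite{Kaye}) and has the virtue of actually \emph{using} Lemma~\ref{coding lemma}, which otherwise sits somewhat idle; the paper's approach is shorter and more in keeping with its general strategy of writing down, for each desired fact, a single affine sentence true in PA. Both arguments ultimately lean on the same standing hypothesis of extremal $\aleph_0$-saturation to pass from vanishing infima to exact witnesses, and you correctly flag this as the only nontrivial point.
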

\begin{proof}
This can be proved either by the Chinese remainder theorem or using the following affine condition
which holds in all models PA:
$$\sup_{x_0...x_{n-1}}\inf_u[d((u)_0,x_0)+\cdots+d((u)_{n-1},x_{n-1})]\leqslant0.$$
\end{proof}

We can now use the coding function to define factorial of $x$. Let
$$\phi(x,y)=\inf_w\big[d((w)_0,1)+\sup_{i\leqslant x-1}d((w)_{i+1},(i+1)(w)_i)+d((w)_x,y)\big].$$
Then, PA satisfies the following conditions (stating the existence and uniqueness of $x!$):
$$\sup_{1\leqslant x}\inf_y\phi(x,y)\leqslant0$$
$$\sup_{1\leqslant x}\sup_{yz}[d(y,z)-\phi(x,y)-\phi(x,z)]\leqslant0$$
So, AA satisfies them too. Therefore, a function $f(x)$ is defined on $M\vDash\textrm{AA}$ with the property that
$$f(0)=1,\ \ \ \ \ \ \ \ \ f(x+1)=(x+1)f(x).$$
This function is definable, since its graph is type-definable.
One can similarly define the function $x^y$ for any normal $x$.
If $\mu$ is an ultracharge on $\Nn$, we can give concrete definitions for these functions on $\Nn^\mu$ by:
$$[x_i]!=[x_i!], \ \ \ \ \ \ \ \ \ [x_i]^{[y_i]}=[x_i^{y_i}].$$
A more general application of Lemma \ref{coding lemma} is the following.
Let $\phi(x,y)$ be a formula with parameters in $M$.
Assume there is a definable function $f(x)$ such that $0\leqslant\phi^M(x,f(x))$ for all $x$.
Then, the following holds:
$$0\leqslant\inf_x\sup_w\inf_{i\leqslant x}\phi^M(x,(w)_i).$$

\section{The splitting theorem}
Let $M\subseteq N$ be models of $\textrm{AA}^\sim$. Then, $N$ is a \emph{cofinal} extension
of $M$ if for every $x\in N$ there is $y\in M$ such that $x\leqslant y$.
It is an \emph{end extension} of $M$ if for every $x\in N$,
\ \ $x\leqslant y\in M$ implies that $x\in M$. These are respectively denoted by
$M\subseteq_{\textrm{cf}}N$ and $M\subseteq_{\textrm{e}}N$. Also, one writes $M\preccurlyeq_{\Sigma_n}N$
if $\phi^M(\a)=\phi^N(\a)$ for every $\Sigma_n$-formula $\phi(\x)$ and $\a\in M$.
In Example \ref{examples-bounded},
one has that $\Nn\preccurlyeq_{\textrm{cf}}\mathbf{N}\preccurlyeq_{\textrm{e}}\mathbb{N}^\mu$.
The following lemma is an easy consequence of the definitions.

\begin{lemma}\label{end-sigma0}
Assume $M\subseteq_{\textrm{e}} N$ are models of $\textrm{AA}^\sim$. Then, $M\preccurlyeq_{\Sigma_0}N$.
\end{lemma}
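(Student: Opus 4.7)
The plan is a straightforward induction on the complexity of the $\Sigma_0$-formula $\phi(\x)$, with the one genuine content being the behavior of bounded quantifiers under an end extension, which is exactly where the lattice operation $\wedge$ is used to replace the usual bounded quantification $\exists x<t$.

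For the base case, if $\phi$ is atomic (built from $d$ applied to terms over $L$), then $\phi^M(\a)=\phi^N(\a)$ for $\a\in M$ because $M$ is a substructure of $N$ and all function symbols of $L$, as well as $d$, agree on elements of $M$. The inductive clauses for $\phi+\psi$ and $r\phi$ are immediate, since these are just real-valued arithmetic operations on values that agree by the inductive hypothesis.

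The main step is the bounded quantifier case $\sup_{x\leqslant t}\phi(x,\a)$, which by definition is $\sup_{x}\phi(x\wedge t^M(\a),\a)$ (and symmetrically for $\inf_{x\leqslant t}$). Write $b=t^M(\a)=t^N(\a)\in M$. I claim that
$$\{x\wedge b:\ x\in N\}=\{y\in M:\ y\leqslant b\}=\{x\wedge b:\ x\in M\},$$
so that the sets over which we take the supremum in $M$ and in $N$ coincide. For the first equality, note that $x\wedge b\leqslant b\in M$ for every $x\in N$, and since $M\subseteq_{\textrm e}N$, this forces $x\wedge b\in M$; conversely, any $y\in M$ with $y\leqslant b$ satisfies $y=y\wedge b$. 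The second equality is analogous. With this in hand, by the inductive hypothesis applied to $\phi(y,\a)$ for each $y\in M$ with $y\leqslant b$, we get
$$\sup_{x\in N}\phi^N(x\wedge b,\a)\ =\ \sup_{y\in M,\, y\leqslant b}\phi^N(y,\a)
\ =\ \sup_{y\in M,\, y\leqslant b}\phi^M(y,\a)\ =\ \sup_{x\in M}\phi^M(x\wedge b,\a),$$
and likewise for $\inf_{x\leqslant t}$.

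The only subtlety I anticipate is a bookkeeping one: making sure that when a bounded quantifier $\sup_{x\leqslant t(\y)}$ involves parameters, the term $t(\a)$ is interpreted identically in $M$ and $N$ (true because terms are built from the shared function symbols), and that the inductive hypothesis is being applied to a formula of strictly smaller complexity with parameters still lying in $M$. Both are automatic from the recursive definition of affine formulas, so no real obstacle arises.
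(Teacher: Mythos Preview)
Your proof is correct and is exactly the argument the paper has in mind when it calls the lemma ``an easy consequence of the definitions'' without giving a proof: the only nontrivial point is that for $b\in M$ one has $\{x\wedge b:x\in N\}=\{y\in M:y\leqslant b\}$ by the end-extension hypothesis, and you handle this cleanly.
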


\begin{proposition}
Assume $M\subseteq_{\textrm{cf}} N$ are models of $\textrm{AA}^\sim$ and $M\preccurlyeq_{\Sigma_0}N$.
If $M$, $N$ satisfy the collection axioms, then $M\preccurlyeq N$.
\end{proposition}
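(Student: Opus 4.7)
The plan is to show by induction on $n \geqslant 0$ that $\phi^M(\bar{a}) = \phi^N(\bar{a})$ for every $\phi \in \Sigma_n \cup \Pi_n$ and every $\bar{a} \in M$; since every affine formula sits at some level of the hierarchy, this gives $M \preccurlyeq N$. The base case is exactly the hypothesis $M \preccurlyeq_{\Sigma_0} N$. In the inductive step, it suffices by the duality $-\Sigma_{n+1} = \Pi_{n+1}$ (available since scalar multiplication by $-1$ is an AL-connective, and $\Sigma_0 = \Pi_0$ is closed under it) to treat $\Sigma_{n+1}$-formulas $\phi(\bar{z}) = \sup_{\bar{x}} \psi(\bar{x}, \bar{z})$ with $\psi \in \Pi_n$; the $\Pi_{n+1}$-case follows by applying the result to $-\phi$.

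Fix $\bar{a} \in M$. The easy inequality $\phi^M(\bar{a}) \leqslant \phi^N(\bar{a})$ follows because for each $\bar{x} \in M$ the inductive hypothesis gives $\psi^M(\bar{x}, \bar{a}) = \psi^N(\bar{x}, \bar{a})$, and one takes the sup over $M$. For the reverse inequality, cofinality provides, for every $\bar{x} \in N$, a tuple $\bar{t} \in M$ with $\bar{x} \leqslant \bar{t}$ (componentwise), and hence
$$\phi^N(\bar{a}) \;=\; \sup_{\bar{t} \in M} \theta^N(\bar{t}, \bar{a}), \qquad \theta(\bar{t}, \bar{a}) \;:=\; \sup_{\bar{x} \leqslant \bar{t}} \psi(\bar{x}, \bar{a}).$$
By Lemma~\ref{coll}, $\theta$ is $\textrm{Coll}_n$-equivalent to a $\Pi_n$-formula; since both $M$ and $N$ satisfy the full collection scheme, the inductive hypothesis applies to this representative at the parameters $\bar{t}, \bar{a} \in M$ and yields $\theta^N(\bar{t}, \bar{a}) = \theta^M(\bar{t}, \bar{a})$. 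Therefore
$$\phi^N(\bar{a}) \;=\; \sup_{\bar{t} \in M} \theta^M(\bar{t}, \bar{a}) \;=\; \sup_{\bar{t} \in M} \sup_{\bar{x} \leqslant \bar{t},\ \bar{x} \in M} \psi^M(\bar{x}, \bar{a}) \;\leqslant\; \sup_{\bar{x} \in M} \psi^M(\bar{x}, \bar{a}) \;=\; \phi^M(\bar{a}),$$
closing the induction.

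The main obstacle is the bounded-quantifier step: the expression $\theta = \sup_{\bar{x} \leqslant \bar{t}} \psi$ is \emph{a priori} $\Sigma_{n+1}$, one level too high for the inductive hypothesis, and has to be dragged back down into $\Pi_n$. Lemma~\ref{coll}, which encapsulates the classical fact that collection flattens bounded quantifiers into the previous hierarchy level, is exactly what delivers this, and it is here that the assumption on \emph{both} $M$ and $N$ satisfying the collection axioms becomes indispensable: without it, the complexity of $\theta$ would climb and the induction would not close.
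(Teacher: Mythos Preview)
Your proof is correct and follows essentially the same approach as the paper: induction on the hierarchy level, using cofinality to replace an unbounded $\sup$ by bounded ones with parameters in $M$, and invoking Lemma~\ref{coll} to keep the bounded formula inside $\Pi_n$ so that the inductive hypothesis applies. The only cosmetic difference is that the paper phrases the key step via an $\epsilon$-approximation ($r-\epsilon\leqslant\sup_{\y\leqslant c}\phi^N$) while you write it directly as $\phi^N(\bar a)=\sup_{\bar t\in M}\theta^N(\bar t,\bar a)$; the content is identical.
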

\begin{proof}
The proof is similar to the classical case. $M\preccurlyeq_{\Sigma_0}N$ holds by the assumptions.
We prove that for each $n$, $M\preccurlyeq_{\Sigma_n}N$ implies $M\preccurlyeq_{\Sigma_{n+1}}N$.
For this purpose, it is sufficient (as the converse direction is obvious) to assume
$M\preccurlyeq_{\Sigma_n}N$ and prove that for every $\phi(\x,\y)\in\Pi_n$ and $\a\in M$
$$r\leqslant\sup_{\y}\phi^N(\a,\y)\ \ \ \Longrightarrow\ \ \ r\leqslant\sup_{\y}\phi^M(\a,\y)\ \ \ \ \ \ \ \ \forall r.$$
So, suppose that the left-hand side of the asserted implication holds. Let $\epsilon>0$ be given. Then there is $c\in M$
such that $$r-\epsilon\leqslant\sup_{\y\leqslant c}\phi^N(\a,\y).$$
By Lemma \ref{coll}, $\sup_{\y\leqslant c}\phi(\a,\y)\in\Pi_n({\textrm{Coll}})$. So, by the induction hypothesis,
we must have that $$r-\epsilon\leqslant\sup_{\y\leqslant c}\phi^M(\a,\y).$$
So, $$r-\epsilon\leqslant\sup_{\y}\phi^M(\a,\y)$$ and the claim is proved as $\epsilon$ is arbitrary.
\end{proof}

\begin{proposition}\label{Gaifman1}
Let $M\subseteq_{\textrm{cf}}N$ be models of $\textrm{AA}^\sim$. Assume $M\preccurlyeq_{\Sigma_0}N$
and $M\vDash\textrm{AA}$. Then, $M\preccurlyeq N$ and hence $N\vDash\textrm{AA}$.
\end{proposition}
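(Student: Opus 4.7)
The plan is to bootstrap the hypothesis $M\preccurlyeq_{\Sigma_0}N$ up the arithmetic hierarchy, showing by induction on $n$ that $M\preccurlyeq_{\Sigma_n}N$. Once this holds at every level, the affine version of Tarski's test yields $M\preccurlyeq N$, whence $N\equiv M\vDash$ AA gives $N\vDash$ AA. The base $n=0$ is the hypothesis, and the inductive step follows the template of the previous proposition.

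Thus, assuming $M\preccurlyeq_{\Sigma_n}N$ (hence $M\preccurlyeq_{\Pi_n}N$), I fix $\phi(\bar x,\bar y)\in\Pi_n$ and $\bar a\in M$, and show that $r\leqslant\sup_{\bar y}\phi^N(\bar a,\bar y)$ implies $r-\epsilon\leqslant\sup_{\bar y}\phi^M(\bar a,\bar y)$ for each $\epsilon>0$. Picking $\bar b\in N$ with $\phi^N(\bar a,\bar b)\geqslant r-\epsilon/2$ and using cofinality to bound $\bar b\leqslant c$ for some $c\in M$, the problem reduces to transferring $\sup_{\bar y\leqslant c}\phi^N(\bar a,\bar y)\geqslant r-\epsilon/2$ from $N$ back to $M$. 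The previous proposition cannot be invoked verbatim, because $N$ is not assumed to satisfy collection; the argument must route its appeals to Lemma~\ref{coll} through $M$ only.

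The route I take is the following. Since $M\vDash$ AA, Lemma~\ref{coll} applies in $M$ and provides a $\Pi_n$-formula $\chi(\bar a,c)$ with $\chi^M=(\sup_{\bar y\leqslant c}\phi)^M$, obtained by iterated rearrangement via collection axioms together with G\"odel coding (Lemma~\ref{coding lemma}) used to collapse the tuple $\bar y$ into a single code bounded by a term in $c$. The key point is to arrange the rewriting so that the inequality $\chi\geqslant\sup_{\bar y\leqslant c}\phi$ is tautological in AA$_0$, valid by pure quantifier monotonicity at each Coll step (e.g.\ $\inf_s\sup_{\bar y\leqslant c}\inf_{\bar u\leqslant s}\psi\geqslant\sup_{\bar y\leqslant c}\inf_{\bar u}\psi$ always). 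That inequality then holds in $N$ as well, giving $\chi^N\geqslant r-\epsilon/2$. The induction hypothesis $M\preccurlyeq_{\Pi_n}N$ transfers this to $\chi^M\geqslant r-\epsilon/2$, and collection in $M$ closes the chain via $(\sup_{\bar y\leqslant c}\phi)^M=\chi^M\geqslant r-\epsilon/2\geqslant r-\epsilon$. The priming cases $n=1$ and $n=2$ can be handled more cheaply: for $n=1$, $\sup_{\bar y\leqslant c}\phi$ is itself $\Sigma_0$ and transfers directly by $\Sigma_0$-elementarity; for $n=2$, a single application of Coll in $M$ produces a $\Pi_1$-rearrangement whose trivial direction is evident from quantifier monotonicity, and $\Pi_1$-elementarity suffices.

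The main obstacle is making Lemma~\ref{coll}'s rearrangement preserve a consistent one-sided direction at every step, so that the conclusion $\chi\geqslant\sup_{\bar y\leqslant c}\phi$ remains a tautology of AA$_0$. This is where the $\Sigma_2$-expressibility of the pairing bijection noted after Lemma~\ref{coding lemma} enters: once $M\preccurlyeq_{\Sigma_2}N$ is in place, the coding bijection transfers to $N$ and the multi-variable bounded quantifier over $\bar y\leqslant c$ can be replaced inside $N$ by a single bounded quantifier over a code, which is what permits each iterated Coll step in $M$ to be mirrored by a trivial monotonicity inequality in $N$.
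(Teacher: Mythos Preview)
Your inductive plan is the right shape, and your diagnosis that $N$ is not known to satisfy collection (so Lemma~\ref{coll} cannot be invoked \emph{inside} $N$) is exactly the crux. But the proposed cure --- arranging the $\textrm{Coll}$-rewriting so that the inequality $\chi\geqslant\sup_{\bar y\leqslant c}\phi$ is an $\mathrm{AA}_0$-tautology --- does not go through once $\phi$ has two or more quantifier alternations. Concretely, take $\phi=\inf_u\sup_v\theta$ with $\theta\in\Sigma_0$. The first $\textrm{Coll}$ step produces $\chi=\inf_s\sup_{\bar y\leqslant c}\inf_{u\leqslant s}\sup_v\theta$, and indeed $\chi\geqslant\sup_{\bar y\leqslant c}\phi$ is a tautology. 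But $\chi$ is not yet syntactically $\Pi_2$; to make it so you must apply $\textrm{Coll}$ once more to $\inf_{u\leqslant s}\sup_v\theta$, yielding $\chi'=\inf_s\sup_t\sup_{\bar y\leqslant c}\inf_{u\leqslant s}\sup_{v\leqslant t}\theta$. Now the trivial direction of this second step runs the \emph{other} way ($\sup_t\inf_{u\leqslant s}\sup_{v\leqslant t}\theta\leqslant\inf_{u\leqslant s}\sup_v\theta$), so one only gets $\chi'\leqslant\chi$ tautologically, not $\chi'\geqslant\sup_{\bar y\leqslant c}\phi$. In $N$ it is entirely possible that $\chi'^N<\sup_{\bar y\leqslant c}\phi^N$, because bounding the inner $v$-witnesses uniformly in $u\leqslant s$ is precisely a collection statement that $N$ need not satisfy. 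Your final paragraph invokes coding to collapse the tuple $\bar y$, but that does nothing to the alternation depth of $\phi$ itself, which is where the obstruction lives.

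The paper resolves this by a genuinely different device: it first passes to (non-principal) ultrapowers so that $M$ and $N$ may be assumed extremally $\aleph_1$-saturated, and then, for the step from $\Sigma_n$ to $\Sigma_{n+1}$ (with $n\geqslant 2$), it works with the $\Pi_{n+1}$ direction $\psi=\inf_y\sup_z\phi$, $\phi\in\Pi_{n-1}$. Given $r\leqslant\inf_{y\leqslant b}\sup_z\phi^M(\bar a,y,z)$, extremal saturation of $M$ supplies a single code $w\in M$ with $r\leqslant\inf_{y\leqslant b}\phi^M(\bar a,y,(w)_y)$. This replaces the unbounded $\sup_z$ by a definable term, dropping one alternation; the resulting formula is $\Sigma_n$ (via Lemma~\ref{coll} applied in $M$), transfers to $N$ by the induction hypothesis, and there trivially implies $r\leqslant\inf_{y\leqslant b}\sup_z\phi^N$. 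The $\Sigma_2$ base is handled directly by cofinality plus collection in $M$. The saturation-plus-coding step is the idea your argument is missing.
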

\begin{proof}
Let $\mathcal F$ be a non-principal ultrafilter on $\Nn$.
Then, $M^{\mathcal F}\subseteq_{\textrm{cf}}N^{\mathcal F}$ and they are both extremally $\aleph_1$-saturated.
Moreover, $M^{\mathcal F}\preccurlyeq_{\Delta_0}N^{\mathcal F}$.
So, we may come back again and assume without loss that $M$ and $N$ are extremally $\aleph_1$-saturated.
We first show that for every $\Sigma_2$ formula $\phi(\x)$,\ \ $\a\in M$ and $r$,
$$r\leqslant\phi^N(\a)\ \ \ \Longrightarrow\ \ \ r\leqslant\phi^M(\a).$$
Assume $\phi(\x)=\sup_{\y}\inf_{\z}\psi(\x,\y,\z)$ where $\psi\in\Sigma_0$.
So, by the assumptions, there is $b\in M$ such that for all $c\in M$ one has that
$$r\leqslant\sup_{\y\leqslant b}\inf_{\z\leqslant c}\psi^N(\a,\y,\z).$$
Since $M\preccurlyeq_{\Sigma_0}N$, we conclude that
$$r\leqslant\inf_{w}\sup_{\y\leqslant b}\inf_{\z\leqslant w}\psi^M(\a,\y,\z)$$
and hence by collection axiom (indeed, its contraposition form)
$$r\leqslant\sup_{\y\leqslant b}\inf_{\z}\psi^M(\a,\y,\z)\leqslant\phi^M(\a)$$
as required.
Now, for each $n\geqslant2$, we assume $M\preccurlyeq_{\Sigma_n}N$ and prove that $M\preccurlyeq_{\Sigma_{n+1}}N$.
Equivalently, that for every $\psi(\x)\in\Pi_{n+1}$ and $r$
$$r\leqslant\psi^M(\a)\ \ \ \Longrightarrow\ \ \ r\leqslant\psi^N(\a).$$
As mentioned before, there are conditions of the form $\phi\leqslant0$, where $\phi$ is $\Sigma_2$,
stating that the pairing function is definable and bijective.
These conditions hold in $M$ as $M\vDash\textrm{AA}$. So, they hold in $N$ too.
Consequently, we may assume $\y$ and $\z$ are single variables.

Let $\psi(\a)=\inf_y\sup_z\phi(\a,y,z)$ be a formula with $\phi\in \Pi_{n-1}$.
By the cofinality assumption, we only need to prove that for each $b\in M$:
$$r\leqslant\inf_{y\leqslant b}\sup_z\phi^M(\a,y,z)\ \ \ \Longrightarrow\ \ \ r\leqslant\inf_{y\leqslant b}\sup_z\phi^N(\a,y,z).$$
So, assume the left-hand side of the above implication holds. Then, by extremal saturation, there exists
$w\in M$ such that
$$r\leqslant\inf_{y\leqslant b}\phi^M(\a,y,(w)_y).$$
The right-hand side of this last condition is a $\Sigma_n$ formula. Moreover, $(w)_y\in N$ has the same meaning since
$M\preccurlyeq_{\Sigma_2}N$. Therefore, by the induction hypothesis,
we have that $$r\leqslant\sup_w\inf_{y\leqslant b}\phi^N(\a,y,(w)_y)$$
as required.
\end{proof}

\begin{lemma}\label{Lemma for MRDP}
Let $\phi(\x)$ be a bounded affine formula. Then, for every $r$ there is a first order bounded $\theta(\x)$
such that for every $\a\in M\vDash\textrm{PA}$,\ \ $M\vDash r\leqslant\phi(\a)$ iff $M\vDash\theta(\a)$.
\end{lemma}
\begin{proof}
We may assume $\phi$ prenex, i.e. of the form $Q_n\cdots Q_1\psi(\z)$ where $\psi(\z)$ is
quantifier-free and every $Q_i$ is a bounded quantifier of the form $\sup_{\y\leqslant t}$ or $\inf_{\y\leqslant t}$.
Assume the range of $\psi^M(\z)$ is included in the set $\{r_1,...,r_k\}$ for any $M\vDash\textrm{PA}$.
It is easy to see that for each $r$ there is a quantifier-free first order $\theta(\z)$ such that
$M\vDash r\leqslant\psi(\a)$ iff $M\vDash\theta(\a)$ whenever $\a\in M\vDash\textrm{PA}$.
So, correspondingly, $r\leqslant\sup_{\y\leqslant t}\psi$ is equivalent to $\exists\y\leqslant t\ \theta$
and $r\leqslant\inf_{\y\leqslant t}\psi$ is equivalent to $\forall\y\leqslant t\ \theta$ in any $M\vDash\textrm{PA}$.
Now, use induction on $n$.
\end{proof}


The classical MRDP theorem states that every r.e. set is Diophantine.
In the logical language, it states that every (first order) $\Sigma_1$-formula is PA-equivalent to
an existential formula. Equivalently, every $\Pi_1$-formula is PA-equivalent to a universal formula.
This theorem is provable in PA.
By a $\sup$-formula (resp. $\inf$-formula) we mean an affine formula of the form $\sup_{\y}\psi$
(resp. $\inf_{\y}\psi$) where $\psi$ is affine quantifier-free.
The following proposition is a form of MRDP theorem in affine arithmetic.

\begin{proposition} \label{WMRDP}
For every affine bounded formula $\phi(\x)$ there are affine $\sup$-formula $\xi(\x)$
and affine $\inf$-formula $\eta(\x)$ such that
$$AA\ \vDash\ \phi(\x)=\xi(\x)=\eta(\x).$$
\end{proposition}
\begin{proof}
Let $\psi(\z)$ and $\{r_1,...,r_k\}$ be as in Lemma \ref{Lemma for MRDP}.
There is no harm if we further assume that $0<r_1<\cdots<r_k$.
The lemma implies that any condition $\phi(\x)=r_i$
is stated by a bounded first order $\theta_i(\x)$, i.e. $\phi^M(\a)=r_i$
iff $M\vDash\theta_i(\a)$ for any $\a\in M\vDash\textrm{PA}$.
By the MRDP theorem, for each $i$ there are existential $\xi_i(\x)$
and universal $\eta_i(\x)$ such that
$$\textrm{PA}\vDash\theta_i(\x)\leftrightarrow\xi_i(\x)\leftrightarrow\eta_i(\x).$$
It was proved in Lemma \ref{affine reduction} that every quantifier-free first order formula
is PA-equivalent to a quantifier-free affine formula.
Correspondingly, every existential (resp. universal) first order formula is PA-equivalent
to a $\sup$-formula (resp. $\inf$-formula).
So, we may now assume that every $\xi_i$ is a $\sup$-formula and every $\eta_i$ is an $\inf$-formula
and write $$\textrm{PA}\vDash\theta_i(\x)=\xi_i(\x)=\eta_i(\x).$$ 
Let $$\xi(\x)=\sum_ir_i\xi_i(\x), \hspace{15mm} \eta(\x)=\sum_ir_i\eta_i(\x).$$
Then, $\xi$ is equivalent to a $\sup$-formula and $\eta$ is equivalent to an $\inf$-formula.
Moreover, since $\theta_i$'s are pairwise inconsistent, 
$$\textrm{PA}\vDash\phi(\x)=\xi(\x)=\eta(\x).$$
We conclude that these conditions are satisfied in AA too.
\end{proof}

Using Proposition \ref{WMRDP}, one shows that for models $M,N$ of AA,
$M\subseteq N$ implies $M\preccurlyeq_{\Sigma_0}N$.

\begin{corollary} \label{Gaifman2} (Gaifman)
Let $M\subseteq_{\textrm{cf}}N$ be models of $\textrm{AA}^\sim$ and $M\vDash\textrm{AA}$.
Then, the following are equivalent:

(i) $M\preccurlyeq_{\Sigma_0}N$

(ii) $N\vDash\textrm{AA}$

(iii) $M\preccurlyeq N$.
\end{corollary}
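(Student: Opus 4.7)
The plan is to establish the cycle (i) $\Rightarrow$ (iii) $\Rightarrow$ (ii) $\Rightarrow$ (i), leaning on Proposition \ref{Gaifman1} and the remark following Assumption \ref{MRDP}.

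The implication (i) $\Rightarrow$ (iii) is exactly Proposition \ref{Gaifman1}, since $M \subseteq_{\textrm{cf}} N$, both structures satisfy AA$_0$, and $M \vDash$ AA by hypothesis. The implication (iii) $\Rightarrow$ (ii) is then immediate: if $M \preccurlyeq N$ and $M$ satisfies AA, then every closed condition in AA that holds in $M$ also holds in $N$, so $N \vDash$ AA.

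The interesting step is (ii) $\Rightarrow$ (i), which is essentially the contents of the paragraph that precedes the statement of the corollary. Assuming $N \vDash$ AA and given any $\Sigma_0$-formula $\phi(\x)$, affine MRDP supplies a quantifier-free $\psi(\x,\y)$ with
$$\text{AA}\ \vDash\ \phi(\x) = \sup_{\y}\psi(\x,\y).$$
Applying affine MRDP also to $-\phi$ (which is again $\Sigma_0$, using scalar multiplication by $-1$), we obtain a quantifier-free $\chi(\x,\z)$ with
$$\text{AA}\ \vDash\ \phi(\x) = \inf_{\z}(-\chi)(\x,\z).$$
Since $M \subseteq N$ and quantifier-free formulas are preserved under substructures (they are built from $1$, terms, the metric, and the positive-linear connectives), for each $\a \in M$ we have both
$$\phi^N(\a) = \sup_{\y}\psi^N(\a,\y) \geqslant \sup_{\y}\psi^M(\a,\y) = \phi^M(\a)$$
and
$$\phi^N(\a) = \inf_{\z}(-\chi)^N(\a,\z) \leqslant \inf_{\z}(-\chi)^M(\a,\z) = \phi^M(\a).$$
Hence $\phi^M(\a) = \phi^N(\a)$, so $M \preccurlyeq_{\Sigma_0} N$, which is (i).

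Thus the cycle closes and all three conditions are equivalent. The only nontrivial obstacle is really Proposition \ref{Gaifman1}, which has already been proved; beyond that, the argument is a bookkeeping exercise in combining the MRDP normal form with cofinality.
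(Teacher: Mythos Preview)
Your proof is correct and follows exactly the route the paper intends: the corollary is stated without proof because it is immediate from Proposition~\ref{Gaifman1} for (i)$\Rightarrow$(iii), the trivial (iii)$\Rightarrow$(ii), and the MRDP remark (which you have spelled out in detail, including the trick of applying it to $-\phi$) for (ii)$\Rightarrow$(i). One minor comment: your closing sentence mentions ``combining the MRDP normal form with cofinality,'' but in fact your (ii)$\Rightarrow$(i) step uses only $M\subseteq N$ and not cofinality at all---cofinality enters only through Proposition~\ref{Gaifman1}.
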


\begin{corollary} \label{affine splitting}(Affine splitting theorem)
If $M\subseteq N$ are models of AA, then there is a unique $K\vDash\textrm{AA}$ such that
$M\subseteq_{\textrm{cf}}K\subseteq_{\textrm{e}}N$ and $M\preccurlyeq K$.
\end{corollary}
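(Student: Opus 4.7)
The plan is to take $K := \{x \in N : x \leqslant y \text{ for some } y \in M\}$, a choice that is in fact forced by the requirements: cofinality of $M$ in $K$ means every element of $K$ must lie below one in $M$, and $K \subseteq_{\textrm{e}} N$ together with $M \subseteq K$ forces every $x \in N$ with $x \leqslant y \in M$ to belong to $K$. This choice makes $M \subseteq_{\textrm{cf}} K \subseteq_{\textrm{e}} N$ tautological, and $K$ is a substructure of $N$ because $+,\cdot,\wedge,\vee$ preserve the partial order on the positive cone: if $x \leqslant a$ and $y \leqslant b$ with $a,b\in M$, then $x+y \leqslant a+b$, $xy \leqslant ab$, $x\vee y \leqslant a\vee b$, and $x\wedge y \leqslant a$, all of which lie in $M$.

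The first substantive step is to verify $K \vDash \textrm{AA}_0$. The axioms A1, A3--A9 and the second clause of A2 are universal and pass automatically to the substructure. For A10 and the infimum clause of A2, the witnesses are already bounded: by Lemma \ref{modified subtraction}, the $t$ solving $(x\wedge y)+t=y$ satisfies $t\leqslant y$, so $t\in K$ whenever $y\in K$; and for A2, replacing $y$ by $y\wedge x$ leaves $x\wedge y$ unchanged, so the infimum over $y\in N$ agrees with the infimum over $y\leqslant x$, which already lies in $K$. Hence the infima computed in $K$ match those in $N$.

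Next, since $K \subseteq_{\textrm{e}} N$, bounded quantifiers evaluated in $K$ and in $N$ coincide on parameters in $K$, so $K \preccurlyeq_{\Sigma_0} N$. Combined with $M \preccurlyeq_{\Sigma_0} N$ (the consequence of the affine MRDP assumption noted just before Corollary \ref{Gaifman2}), this yields $M \preccurlyeq_{\Sigma_0} K$. Corollary \ref{Gaifman2} applied to $M \subseteq_{\textrm{cf}} K$ then delivers both $K \vDash \textrm{AA}$ and $M \preccurlyeq K$. For uniqueness, any $K'$ with $M \subseteq_{\textrm{cf}} K' \subseteq_{\textrm{e}} N$ must satisfy $K' \subseteq K$ by cofinality of $M$ in $K'$, and $K \subseteq K'$ since each $x\in K$ is bounded by some $y\in M \subseteq K'$ and $K' \subseteq_{\textrm{e}} N$ then absorbs $x$.

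The main obstacle I anticipate is the verification $K \vDash \textrm{AA}_0$: the existential axioms are not preserved by arbitrary substructures, and the argument depends on the structural observation that the natural witnesses for A10 and A2 are already bounded by the given data and hence stay in $K$. Everything else is bookkeeping once Corollary \ref{Gaifman2} is in hand.
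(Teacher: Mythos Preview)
Your argument is correct and follows the same route as the paper: define $K=\{x\in N:\exists y\in M\ x\leqslant y\}$, observe $M\subseteq_{\textrm{cf}}K\subseteq_{\textrm{e}}N$, deduce $K\preccurlyeq_{\Sigma_0}N$ from the end-extension and $M\preccurlyeq_{\Sigma_0}N$ from affine MRDP, hence $M\preccurlyeq_{\Sigma_0}K$, and then appeal to the Gaifman machinery. The only cosmetic difference is that you cite Corollary~\ref{Gaifman2} where the paper cites Proposition~\ref{Gaifman1} directly; since the former is an immediate repackaging of the latter this is not a genuine divergence. You are in fact more careful than the paper on two points it leaves implicit: you verify explicitly that $K$ is closed under the operations and satisfies the $\inf$-axioms of $\textrm{AA}_0$ (needed to feed $K$ into either result), and you spell out the uniqueness argument, which the paper asserts in the statement but does not address in its proof.
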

\begin{proof}
Let $$K=\{a\in N:\ \exists b\in M\ \ a\leqslant b\}.$$
Then, $K$ is a model of $\textrm{AA}^\sim$ and $M\subseteq_{\textrm{cf}}K\subseteq_{\textrm{e}}N$.
By Lemma \ref{end-sigma0}, $K\preccurlyeq_{\Sigma_0}N$. Also, Proposition \ref{WMRDP},
$M\preccurlyeq_{\Sigma_0}N$. We conclude that $M\preccurlyeq_{\Sigma_0}K$ and hence
by Proposition \ref{Gaifman1}, $M\preccurlyeq K$. The uniqueness of $K$ is obvious.
\end{proof}

In Peano arithmetic, one uses the fact that $\Nn\subseteq_{\textrm{e}}M$ to show that
the $\Sigma_1$-theory of $\Nn$ holds in every model of PA. This can be similarly done here.
By the (affine) $\Sigma_1$-theory of $\Nn$ is meant
$$\{0\leqslant\sup_{\x}\phi(\x):\ \phi\in\Sigma_0,\ 0\leqslant\sup_{\x}\phi^\Nn(\x)\}.$$
Let $M\vDash\textrm{AA}$. We have generally that $\Nn\subseteq M$.
Let $$K=\{a\in M:\ \exists n\in\Nn\ \ a\leqslant n\}.$$
By Corollary \ref{affine splitting}, $\Nn\subseteq_{\textrm{cf}}K\subseteq_{\textrm{e}}M$
and $\Nn\preccurlyeq K\preccurlyeq_{\Sigma_0}M$.
Now, for every $\Sigma_1$-sentence $\sup_{\x}\phi(\x)$,
if $0\leqslant\sup_{\x}\phi^{\Nn}(\x)$, then $0\leqslant\sup_{\x}\phi^M(\x)$.

\begin{proposition} \label{cofinal extension}
Let $M\vDash\textrm{AA}$ contain an upper-standard element, i.e. $b\in M$ such that
$n\leqslant b$ for all $n$. Then, $M$ has a proper elementary cofinal extension.
\end{proposition}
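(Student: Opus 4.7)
The plan is to build a proper elementary extension of $M$ containing a new element dominated by the upper-standard $b$, and then extract its cofinal part over $M$ via Gaifman's splitting theorem.

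First, I take a non-principal ultrafilter $\mathcal F$ on $\Nn$ and set $N=M^{\mathcal F}$. By the basic ultramean theory, $M\preccurlyeq N$. Consider the element $c=[(n)_{n\in\Nn}]\in N$, the class of the sequence of standard naturals. Since $n\leqslant b$ holds for every $n\in\Nn$ by hypothesis, we have $n\wedge b=n$ in $M$, hence $c\wedge b=[(n\wedge b)_n]=[(n)_n]=c$, i.e.\ $c\leqslant b$ in $N$.

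Next I verify that $c$ is metrically distinct from every element of $M$. Combining axiom A3 with the fact $|n\cdot 1|=1$ for $n\geqslant 1$ gives $d(n,n')=|(n-n')\cdot 1|=1$ for distinct $n,n'\in\Nn$. The triangle inequality then yields, for every $m\in M$, $d(n,m)+d(n',m)\geqslant 1$, so at most one $n\in\Nn$ can satisfy $d(n,m)<\frac{1}{2}$. Passing to the ultralimit, $d(c,m)=\lim_{\mathcal F}d(n,m)\geqslant\frac{1}{2}$, and in particular $c\notin M$.

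Finally, I apply Gaifman's splitting theorem to $M\subseteq N$ to obtain $K\vDash$ AA with $M\subseteq_{\textrm{cf}}K\subseteq_{\textrm{e}}N$ and $M\preccurlyeq K$. By the explicit construction in that proof, $K=\{a\in N:\exists b'\in M,\ a\leqslant b'\}$. Since $c\leqslant b\in M$, we have $c\in K$, while the previous step gives $c\notin M$. Thus $K$ is a proper cofinal elementary extension of $M$, as required.

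The subtle point is step two: in affine models of AA the metric can be non-discrete, so to bound $d(c,m)$ away from zero uniformly in $m$ one genuinely uses the arithmetic axioms in the form $|n\cdot 1|=1$. Without the upper-standard hypothesis, there would be no way to place such a $c$ below an element of $M$, and Gaifman's splitting theorem could return $K=M$; with it, the construction goes through cleanly.
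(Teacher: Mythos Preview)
Your proof is correct and follows essentially the paper's approach: construct an elementary extension $N\succcurlyeq M$ containing a new element $c\leqslant b$ uniformly bounded away from $M$, then pass to the downward closure $K=\{a\in N:\exists b'\in M,\ a\leqslant b'\}$ and invoke Proposition~\ref{Gaifman1} (via Gaifman splitting) to get $M\preccurlyeq K$ proper cofinal. The only difference is in how $N$ and $c$ are produced: the paper uses CL compactness with a new constant $c$ subject to $d(a,c)\geqslant\tfrac{1}{3}$ for all $a\in M$, whereas you take $N=M^{\mathcal F}$ and $c=[(n)_n]$ explicitly; both rest on the same arithmetic fact that distinct standard naturals are at distance $1$, so any $m\in M$ is within $\tfrac{1}{2}$ of at most one $n\in\Nn$.
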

\begin{proof}
Let $c$ be a new constant symbol and
$$T=ediag(M)\cup\{\frac{1}{3}\leqslant d(a,c):\ a\in M\}\cup\{d(c\wedge b,c)=0\}.$$
To prove that $T$ has a model, we only need to show that every finite number of the
middle part of the above set is satisfied by some $c=n\in\Nn$.
Assume not. Then, there are $a_1,...,a_k\in M$ such that for every $n$ there is $1\leqslant i\leqslant k$
with $d(a_i,n)\leqslant\frac{1}{3}$. So, there is $i$ such that for an infinite number of
$n\in\Nn$ one has that $d(a_i,n)\leqslant\frac{1}{3}$. This is impossible.
We conclude by the CL form of the compactness theorem that $T$ has a model say $N$.
So, $M\preccurlyeq N$. Let $$K=\{d\in N:\ d\leqslant a\ \ \textrm{for some}\ a\in M\}\subseteq N.$$
Then, by Corollary \ref{affine splitting}, $M\preccurlyeq N$ and $M\subset_{\textrm{cf}}K\subseteq_e N$
and $M\preccurlyeq K$. Also, $c\in K-M$.
\end{proof}

\begin{corollary}
Every model of AA has arbitrarily large cofinal elementary extensions.
\end{corollary}
\begin{proof}
By Example \ref{examples-bounded}, $\Nn$ has arbitrarily large cofinal elementary extensions.
The same is true if $\Nn\subseteq_{cf}M$ since, in this case, $M$ satisfies the strong form of
collection axioms and a similar argument applies. 
On the other hand, by Proposition \ref{cofinal extension}, this is also true if
$M\vDash\textrm{AA}$ has an upper-standard element.
\end{proof}

\noindent{\bf Question:} Find a complete axiomatization for AA and reprove results on its basis.

\noindent{\bf Question:} Find the logical relations between the induction axioms, LNP and Coll.


\end{document}